\documentclass[10pt]{article}
\usepackage{amsfonts}
\usepackage[leqno]{amsmath}
\usepackage{graphicx}
\usepackage{latexsym}
\usepackage{amsmath,amsfonts,amssymb,amsthm,mathrsfs,euscript,makeidx,color}
\usepackage{enumerate}
\allowdisplaybreaks[3]
\usepackage{multirow}
\usepackage{indentfirst}

\oddsidemargin  = 0pt \evensidemargin = 0pt \marginparwidth = 1in
\marginparsep   = 0pt \leftmargin     = 1.25in \topmargin =0pt
\headheight     = 0pt \headsep        = 0pt \topskip =0pt
\footskip       =0.25in \textheight     = 9in \textwidth      =
6.5in

\def\sqr#1#2{{\vcenter{\vbox{\hrule height.#2pt
				\hbox{\vrule width.#2pt height#1pt \kern#1pt \vrule width.#2pt}
				\hrule height.#2pt}}}}

\def\3n{\negthinspace \negthinspace \negthinspace }
\def\2n{\negthinspace \negthinspace }
\def\1n{\negthinspace }
\def\bel{\begin{equation}\label}
	\def\eel{\end{equation}}

\def\dbE{\mathbb{E}}
\def\dbF{\mathbb{F}}

\def\dbH{\mathbb{H}}

\def\dbP{\mathbb{P}}

\def\dbR{\mathbb{R}}
\def\dbS{\mathbb{S}}

\def\sD{\mathscr{D}}

\def\sR{\mathscr{R}}

\def\sU{\mathscr{U}}


\def\={\buildrel \triangle \over =}

\def\ds{\displaystyle}

\def\ns{\noalign{\ss}}
%
%
\def\a{\alpha}

\def\d{\delta}

\def\l{\lambda}

\def\si{\sigma}

\def\f{\varphi}

\def\i{\infty}
%
%

\def\Th{\Theta}
\def\L{\Lambda}

\def\O{\Omega}

%

%

\def\cD{{\cal D}}

\def\cF{{\cal F}}

\def\cK{{\cal K}}

\def\cM{{\cal M}}
\def\cN{{\cal N}}

\def\cU{{\cal U}}

\def\bel{\begin{equation}\label}
	\def\ee{\end{equation}}
\def\bt{\begin{theorem}\label}
	\def\et{\end{theorem}}
\def\bc{\begin{corollary}\label}
	\def\ec{\end{corollary}}
\def\bl{\begin{lemma}\label}
	\def\el{\end{lemma}}
\def\bp{\begin{proposition}\label}
	\def\ep{\end{proposition}}
\def\bas{\begin{assumption}}
	\def\eas{\end{assumption}}

\def\ba{\begin{array}}
	\def\ea{\end{array}}

%
%

\def\BS{{\bf S}}

%

%
%


\def\Th{\Theta}
\def\L{\Lambda}

\def\O{\Omega}

\def\no{\noindent}

\def\ss{\smallskip}
\def\ms{\medskip}

\def\q{\quad}
\def\qq{\qquad}
\def\hb{\hbox}

%
%

\def\liminf{\mathop{\underline{\rm lim}}}

\def\lan{{\langle}}
\def\ran{{\rangle}}

\def\esssup{\mathop{\rm esssup}}

\def\wt{\widetilde}

\def\cd{\cdot}
\def\cds{\cdots}

\def\les{\leqslant}
\def\ges{\geqslant}

\def\({\Big (}
\def\){\Big )}
\def\[{\Big[}
\def\]{\Big]}
\def\lan{\langle}
\def\ran{\rangle}

\def\bde{\begin{definition}\label}
	\def\ede{\end{definition}}

\def\rf{\eqref}

\def\square#1{\vbox{\hrule\hbox{\vrule height#1%
				\kern#1\vrule}\hrule}}
\def\rectangle#1#2{\vbox{\hrule\hbox{\vrule height#1%
				\kern#2\vrule}\hrule}}

	
	\font\tenbb=msbm10 \font\sevenbb=msbm7 \font\fivebb=msbm5
	
	\newfam\bbfam
	\scriptscriptfont\bbfam=\fivebb \textfont\bbfam=\tenbb
	\scriptfont\bbfam=\sevenbb



	\newtheorem{theorem}{Theorem}[section]
	\newtheorem{corollary}[theorem]{Corollary}
	
	\newtheorem{lemma}[theorem]{Lemma}
	\newtheorem{proposition}[theorem]{Proposition}

	\theoremstyle{definition}
	\newtheorem{definition}[theorem]{Definition}

	\newtheorem{remark}[theorem]{Remark}

	\newtheorem{example}{Example}[section]

	\makeatletter
	
	\@addtoreset{equation}{section}
	\makeatother
	
	%
	\usepackage{xcolor}
	\makeatletter
	
	\input pdf-trans
	\newbox\qbox
	\def\usecolor#1{\csname\string\color@#1\endcsname\space}
	\newcommand\bordercolor[1]{\colsplit{1}{#1}}
	\newcommand\fillcolor[1]{\colsplit{0}{#1}}
	\newcommand\outline[1]{\leavevmode%
		\def\maltext{#1}%
		\setbox\qbox=\hbox{\maltext}%
		\boxgs{Q q 2 Tr \thickness\space w \fillcol\space \bordercol\space}{}%
		\copy\qbox%
	}
	\makeatother
	\newcommand\colsplit[2]{\colorlet{tmpcolor}{#2}\edef\tmp{\usecolor{tmpcolor}}%
		\def\tmpB{}\expandafter\colsplithelp\tmp\relax%
		\ifnum0=#1\relax\edef\fillcol{\tmpB}\else\edef\bordercol{\tmpC}\fi}
	\def\colsplithelp#1#2 #3\relax{%
		\edef\tmpB{\tmpB#1#2 }%
		\ifnum `#1>`9\relax\def\tmpC{#3}\else\colsplithelp#3\relax\fi
	}
	\bordercolor{black}
	\fillcolor{white}
	\def\thickness{.3}

	\begin{document}

       \title{\bf Infinite Horizon   Mean-Field Linear-Quadratic Optimal Control Problems with Switching and Indefinite-Weighted
			Costs}
		
		\author{Hongwei Mei\footnote{ Department of Mathematics and Statistics, Texas Tech University, Lubbock, TX 79409, USA; email: {\tt hongwei.mei@ttu.edu}. This author is partially supported by Simons Travel Grant MP-TSM-00002835.},~~~Rui Wang\footnote{ Department of Mathematics and Statistics, Texas Tech University, Lubbock, TX 79409, USA; email: {\tt rui-math.wang@ttu.edu}},~~~Qingmeng Wei\footnote{School of Mathematics and Statistics, Northeast Normal University, Changchun 130024, China; email: {\tt weiqm100@nenu. edu.cn}. This  author is supported in part by   NSF of Jilin Province for Outstanding Young Talents (20230101365JC) and  NSF of   China (12371443), the National Key R\&D Program
of China (2023YFA1009002), the Changbai Talent Program of Jilin Province.} ~~~\text{and}~~~
			Jiongmin Yong\footnote{Department of Mathematics, University of Central Florida, Orlando, FL 32816, USA; email: {\tt jiongmin.yong@ucf.edu}. This author is supported in part by NSF Grant DMS-2305475.}  }
		
		\maketitle
		
		\no\bf Abstract: \rm  This paper is concerned with an infinite horizon  stochastic linear quadratic (LQ, for short) optimal control
		problems   with conditional mean-field terms in a switching environment. Different from \cite{Mei-Wei-Yong-2025}, the cost functionals do not have positive-definite weights here. 
		When the problems are merely finite, we construct a sequence of asymptotic optimal controls and derive their closed-loop representations. For the solvability,  an equivalence result between  the open-loop and  closed-loop cases is established  through algebraic Riccati  equations and infinite horizon backward stochastic differential equations. It can be seen that the research  in \cite{Mei-Wei-Yong-2025} with positive-definite weights is a special case of the current paper.

		\ms
		
		\no\bf Keywords: \rm  Linear-quadratic optimization problem, infinite horizon,  conditional mean-field, Markov switching,
		
		\ms
		
		\no\bf AMS Mathematics Subject Classification. \rm 93E20, 49N10, 60F17.

\section{Introduction}

Since the    works of Kushner \cite{Kushner-1962} and Wonham \cite{Wonham-1968},
   the stochastic linear quadratic (LQ, for short)  control problems have been   investigated extensively in the literature.
Here we only highlight some studies related to indefinite weighted costs or random coefficients.
For stochastic LQ problems, Chen--Li--Zhou \cite{Chen-Li-Zhou-1998} found that  the weighting matrix of the control in the cost functional does not have to be positive definite, or even could be negative definite to some extent.
 This kind of problems has been termed to be {\it indefinite LQ problems}, for which we also refer to the study in  \cite{Rami-Moore-Zhou-2001, Rami-Moore-Zhou-2000, Chen-Zhou-2000, Du-2015, Lim-Zhou-1999}, etc.
Under the framework of open-loop and closed-loop solvability,  the    indefinite LQ problems with deterministic coefficients have been investigated  in \cite{Mei-Wei-Yong-2021, Sun-2016, Sun-Yong-2020a}, etc.

 For stochastic LQ problem with random  coefficients, Chen--Yong \cite{Chen-Yong-2000, Chen-Yong-2001} studied the local solvability of backward stochastic differential Riccati equations.
 Tang \cite{Tang-2003, Tang-2015} solved the case of random coefficients with degenerate (positive semi-definite) control weight in the cost functional.
Later, Sun--Xiong--Yong \cite{Sun-Xiong-Yong-2021}    solved the general case of indefinite LQ problem with random coefficients. See L\"u--Wang  \cite{Lv-Wang-2023} for an infinite-dimensional version.
With the development of the  mean-field theory, Yong \cite{Yong-2013} studied the stochastic LQ problem with mean-field, and then followed by  \cite{Huang-Li-Yong-2015, Li-Sun-Yong-2016, Sun-2017, Wei-Yong-Yu-2019, Sun-Yong-2020b, Li-Shi-Yong-2021},  etc. In a switching environment,   the mean-field LQ problems   are also studied, see, for example,   \cite{Pham-2016, Wen-Li-2023, Zhang-Li-Xiong-2018}, etc.  Recently,   the stochastic LQ problem  with coefficients being adapted to a martingale and with conditional mean-field interaction was   introduced and  studied by Mei--Wei--Yong \cite{Mei-Wei-Yong-2024} in   the framework of finite horizon. For the infinite horizon case,  the interested readers can refer to  \cite{Mei-Wei-Yong-2025}, where the positive definiteness  condition is assumed for the weights of the cost functional. This work focuses on  the  indefinite weights case of   infinite horizon mean-field LQ problem  in a switching environment, which can be regarded as  a continuation of  \cite{Mei-Wei-Yong-2024, Mei-Wei-Yong-2025}.

In details, we work on  a complete filtered probability space   $(\O,\cF,\dbF,\dbP)$ on which a one-dimensional standard Brownian motion $W(\cd)$ and a Markov chain $\a(\cd)$ are defined respectively. The Markov chain $\a(\cd)$ is equipped with a finite state space $\cM=\{1,\cds,m_0\}$ and a generator $\L=(\l_{\iota\jmath})_{m_0\times m_0}$. Moreover, we assume that $W(\cd)$ and $\a(\cd)$ are independent. Denoting by $\dbF^W=\{\cF_t^W\}_{t\ges 0}$ and $\dbF^\a=\{\cF_t^\a\}_{t\ges 0}$  the natural filtration of $W(\cd)$ and $\a(\cd)$, augmented by $\cN$ (all the $\dbP$-null sets), respectively. We introduce  the filtration $\dbF =\{\cF_t \}_{t\ges 0} $ with  $\cF_t :=\cF_t^W\vee\cF_t^\a\vee\cN$, $t\ges 0$.

Consider the following $n$-dimensional controlled mean-field stochastic differential equation (MF-SDE, for short) with regime switching (governed by the Markov chain $\a(\cd)$):
\bel{SDE-nonhomo}\left\{\2n\ba{ll}
\ns\ds \!\! dX(t)\!=\!\big\{A(\a(t))X(t)\!+\!\bar A(\a(t))\dbE_t^\a[X(t)]\!+\!B(\a(t))u(t)\!+\!\bar B(\a(t))\dbE_t^\a[u(t)]\!+\!b(t)\big\}dt\\[1mm]
\ns\ds\qq\q+\big\{C(\a(t))X(t)\!+\!\bar C(\a(t))\dbE_t^\a[X(t)]\!+\!D(\a(t))u(t)\!+\!\bar D(\a(t))\dbE_t^\a[u(t)]\!+\!\si(t)\big\}dW(t),\q t\1n\ges\1n s,\\
\ns\ds\!\!  X(s)=x\in\dbR^n,\qq \a(s)=\iota\in\cM,\ea\right.\ee
and the following cost functional
\bel{cost-non}\ba{ll}
\ns\ds J^\infty(s,\iota,x ;u(\cd)) =\dbE\int_s^\infty f\big(t,\a(t),X(t),\dbE_t^\a[X(t)],u(t),\dbE_t^\a[u(t)]\big)dt,\ea\ee
where  $\dbE^\alpha_t[\cd ]$ denotes the conditional expectation with respect to $\cF_t^\a$, and for $(t,\iota ,x,\bar x,u,\bar u)\in [0,\i)\times \cM\times \dbR^n\times\dbR^n\times \dbR^m\times \dbR^m$,
\bel{f-F}\ba{ll}
\ds f(t,\iota ,x,\bar x,u,\bar u):=\frac12\[\lan Q(\iota)x,x\ran+2\lan S(\iota)x,u\ran+\lan R(\iota)u,u\ran +\lan\bar Q(\iota)\bar x,\bar x\ran\1n+2\lan\bar S(\iota) \bar x ,\bar u\ran\1n+\1n\lan\bar R(\iota)\bar u,\bar u\ran\\ [1mm]
\ns\ds\qq\qq\qq\qq\qq+2\lan q(t),x\ran+2\lan\bar q(t),\bar x\ran+2\lan r(t),u\ran+2\lan\bar r(t),\bar u\ran\] . \ea\ee
In the above, all the coefficient matrices  of the state equation \rf{SDE-nonhomo} and the quadratic weight matrices  of the cost functional \rf{cost-non} are  deterministic maps defined on $\cM$; the nonhomogeneous terms $b(\cd),\si(\cd)$ of the state equation \rf{SDE-nonhomo} and the linear weight  terms $q(\cd),r(\cd),\bar q(\cd),\bar r(\cd)$ of the cost functional \rf{cost-non} are some integrable stochastic processes on $[0,\i)$.  The specific conditions will be presented in  Section 2.
We note that,  $\a(\cd)$ being a Markov chain leads the coefficients (such as $A(\a(\cd))$) and weight functions (such as $Q(\a(\cd))$) to be  random.

Introducing the following  set of {\it admissible controls}
\bel{set-Ad}\sU^{s,\iota,x}_{ad}[s,\i):=\Big\{u(\cd)\in L^2_\dbF(s,\i;\dbR^m)\bigm|X(\cd\,;s,\imath,x,u(\cd))\in L^2_\dbF(s,\i;\dbR^n)\Big\},\ee
we formulate a {\it mean-field linear-quadratic  optimal control problem}  as follows.

\ss
\no {\bf Problem (MF-LQ)$^\i$} For any $(s,\iota,x)\in [0,\i)\times\cM\times\dbR^n$, minimize the cost functional \rf{cost-non} subject to the state equation \rf{SDE-nonhomo} over $\sU^{s,\iota,x}_{ad}[s,\i)$.

\ss

 In this paper, we pay attention to    Problem (MF-LQ)$^\i$   with not  necessarily positive-definite weight  costs, unlike Assumption (A3) in \cite{Mei-Wei-Yong-2025}. Here we still adopt the orthogonal decomposition method introduced firstly by \cite{Mei-Wei-Yong-2024} to  deal with the conditional mean-field terms in \rf{SDE-nonhomo} and  \rf{cost-non}.
 There are  some interesting phenomenons  appearing, which are presented in details as follows.

We notice that the result in \cite{Mei-Wei-Yong-2025} can be extended to the case when the cost functional is strictly convex without essential difficulties. Based on this, we first   focus on  the case when  Problem (MF-LQ)$^\i$  is merely finite. The lack of strict convexity  inspires us to   regularize   Problem (MF-LQ)$^\i$ by introducing a
 regularized   cost functional,  denoted by  Problem (MF-LQ)$^\i_\d$ with  $ \d>0 $. Note that, the cost functional of  Problem (MF-LQ)$^\i_\d$  is strictly convex, so that we can apply the results   in \cite{Mei-Wei-Yong-2025} to
get the closed-loop optimal control of  Problem (MF-LQ)$^\i_\d$  explicitly through a system of algebraic Riccati equations (AREs, for short) and backward stochastic differential equations (BSDEs, for short)  with the coefficients depending on the Markov chain.
It turns out that   the optimal controls of  Problem (MF-LQ)$^\i_\d$  provide     a
sequence of asymptotic optimal controls for Problem (MF-LQ)$^\i$.
Meanwhile, we point out that the optimal values of   Problem (MF-LQ)$^\i_\d$ will converge to   the optimal value of  Problem (MF-LQ)$^\i $, but not necessarily for the  optimal controls. An example is given to illustrate this important phenomenon. To tackle this insufficiency,  the solvability of  Problem (MF-LQ)$^\i$ is investigated. An  equivalent characterization between   the open-loop and closed-loop solvability  is given through the solvability of  a system of AREs and infinite horizon BSDEs with the coefficients depending on the Markov chain. Moreover,   the explicit representation of the optimal control     is derived in this case.

The rest of the paper is arranged as follows. First, some preliminary results from \cite{Mei-Wei-Yong-2025}  are recalled  in section \ref{sec:pre}. Then we carry out the study of   Problem (MF-LQ)$^\i$ when it is merely finite in Section \ref{sec:fin}.  Section 4 is about  the research of    the solvability of Problem (MF-LQ)$^\i$.    An  equivalent characterization between   the open-loop and closed-loop solvability  is  concluded. Finally some concluding remarks are made in section \ref{sec:con}.
		
\section{Preliminaries}\label{sec:pre}
	
In this section, we  recall some necessary  preliminary results  from \cite{Mei-Wei-Yong-2025}. For any Euclidean space $\dbH$ and $T\in(0,\i]$, we write
\begin{align*}
&L^2_{\cF_s}(\O;\dbH)=\Big\{\xi :\O\to\dbH\bigm|\xi \hb{ is $\cF_s $-measurable, and }\dbE|\xi|^2<\i\Big\},\\
& L_{\cF_s^\a}^2(\O;\dbH)=\Big\{\xi \in L^2_{\cF_s}(\O;\dbH)\bigm|\xi \hb{ is $\cF_s^\a$-measurable}\Big\},\\
&L^2_{\dbF}(s,T;\dbH)=\Big\{\f:[s,T]\times\O\to\dbH\bigm|\f(\cd)\hb{ is $\dbF$-progressively measurable, }\dbE\int_s^T|\f(t)|^2dt <\infty\Big\},\\
& L^2_{\dbF^\a}(s,T;\dbH)=\Big\{\f(\cd)\in L^2_\dbF(s,T;\dbH)  \bigm|  \f(\cd) \hb{ is $\dbF^\a$-progressively measurable}\Big\},\\
& L^2_{\dbF^\a_-}(s,T;\dbH)=\Big\{\f(\cd)\in L^2_\dbF(s,T;\dbH)  \bigm|  \f(\cd) \hb{ is $\dbF^\a$-predictable}\Big\},\\
&L^\i_{\dbF}(s,T;\dbH)=\Big\{\f:[s,T]\times\O\to\dbH\bigm|\f(\cd)\hb{ is $\dbF$-progressively measurable, }\esssup_{t\in[s,T]}\|\f(t,\cd)\|_\i <\infty\Big\},\\
& L^\i_{\dbF^\a}(s,T;\dbH)=\Big\{\f(\cd)\in L^\infty_{\dbF}(s,T;\dbH)\bigm| \f(\cd)\hb{ is $\dbF^\a$-progressively measurable}\Big\},\\
&\sD=\Big\{(s,\iota,x )\bigm|s\in[0,\i),\ \iota\in\cM,\ x \in L^2_{\cF_s}(\O;\dbR^n)\Big\}, \hb{whose element is called    an {\it admissible initial triple}}.\end{align*}
 For $0\les s <T<\i$, let
$$\sU[s,\i):=L^2_\dbF(s,\i;\dbR^m)\text{ and }\sU[s,T]:=L^2_\dbF(s,T;\dbR^m),$$
which is the set  of 	 {\it feasible controls} on $[s,\i)$ and $[s,T]$, respectively.  For MF-SDE \eqref{SDE-nonhomo} and the cost functional \eqref{cost-non}, we assume  the following throughout the paper.
\medskip

\no{\bf (A1).} The coefficients satisfy
\begin{align*}
 &A(\cd),\bar A(\cd),C(\cd),\bar C(\cd):\cM\to\dbR^{n\times n}, ~B(\cd),\bar B(\cd),D(\cd),\bar D(\cd):\cM\mapsto\dbR^{n\times m},\\
 &b(\cd),\sigma(\cd),~q(\cd)\in L^2_\dbF(0,\infty;\dbR^n),~\bar q(\cd)\in L^2_{\dbF^\a}(0,\i;\dbR^n),~r(\cd)\in L^2_\dbF(0,\i;\dbR^m),~ \bar r(\cd)\in L^2_{\dbF^\a}(0,\i;\dbR^m).
 \end{align*}

Obviously, under {\bf(A1)}, for any $T>0$, any admissible initial triple $(s,\iota,x)\in\sD$ and any feasible control $u(\cd)\in\sU[s,\i)$, the state equation \eqref{SDE-nonhomo} admits a unique solution $X(\cd)=X(\cd\,;s,\iota,x,u(\cd))\in L^2_\dbF(s,T;\dbR^n)$ .
This cannot ensure  $J^\i(s,\imath,x;u(\cd))$  to be well-defined,  so the introduction of the set \eqref{set-Ad} is necessary.
Note that such a set of admissible controls depends on the initial triple $(s,\iota,x)\in\sD$. In the future, we will derive an equivalent characterization that is initial-independent.

\subsection{Orthogonal decomposition of Problem (MF-LQ)$^\i$}
Following the idea of orthogonal decomposition introduced in \cite{Mei-Wei-Yong-2025}, this subsection introduces a new formulation of Problem (MF-LQ)$^\i$.
Recall  the orthogonal decomposition map $\Pi:L^2_{\dbF}(s,T;\dbH)\mapsto L^2_{\dbF^\a}(s,T;\dbH)$ defined by
\begin{equation*}
\Pi[v](t):=\dbE[v(t)|\cF^\a_t]\text{ for any } v(\cd)\in L^2_{\dbF}(s,T;\dbH).
\end{equation*}	
It was proved in \cite{Mei-Wei-Yong-2025}	that $\Pi$ induces the following orthogonal decomposition for $L^2_{\dbF}(s,T;\dbH)$ by	%
\begin{equation*}
L^2_{\dbF}(s,T;\dbH)=L^2_{\dbF^\alpha}(s,T;\dbH)^\perp\oplus L^2_{\dbF^\alpha}(s,T;\dbH),
\end{equation*}
where
\begin{equation*}
L^2_{\dbF^\a}(s,T;\dbH)^\perp:=\Big\{v(\cd)\in L^2_{\dbF}(s,T;\dbH)\bigm|\dbE\int_s^T \lan v(t),\bar v(t)\ran dt=0,\q\forall\bar v(\cd)\in L^2_{\dbF^\a}(s,T;\dbH)\Big\}.
\end{equation*}
Let $\Pi_1:=I-\Pi$ and $\Pi_2:=\Pi$. We then apply such an orthogonal decomposition on  Problem (MF-LQ)$^\i$. Utilizing the orthogonality, it can be seen that the solution $X$ of \eqref{SDE-nonhomo} admits the following decomposition
\begin{equation*}
X(\cd)=X_1(\cd)\oplus X_2(\cd)
\end{equation*}
satisfying
\begin{align}\label{SDE1}\begin{cases}
& \!\!\! \!\!\!dX_1(t)=[A_1(\a(t)) X_1(t)+B_1(\a(t))u_1(t)+b_1(t)]dt\\
&\!\!\! \!\!\!\qq+[C_1(\a(t)) X_1(t)+C_2(\a(t)) X_2+D_1(\a(t))u_1(t)+D_2(\a(t))u_2(t)+\sigma(t)]dW(t),\\
&\!\!\! \!\!\! dX_2(t)=[A_2(\a(t))X_2(t)+B_2(\a(t))u_2(t)+b_2(t)]dt, \q t\in[s,\i),\\
& \!\!\! \!\!\! X_1(s)=x_1,\q X_2(s)=x_2, \q \a(s)=\iota,
\end{cases}\end{align}
and the cost function in \eqref{cost-non} reduces to
\begin{align}\label{cost1}
&J^\infty(s,\iota, x_1\oplus x_2;u_1(\cd)\oplus u_2(\cd)):=J^\infty(s,\iota, x ;u(\cd))\\
&\q=\sum_{i=1}^2\dbE  \int_s^\infty\[\lan  Q_i(\a(t))X_i(t),X_i(t)\ran\! +\!2\lan S_i(\a(t))X_i(t),u_i(t)\ran\!+\!\lan R_i(\a(t))u_i(t),u_i(t)\ran\nonumber\\
&\qq\qq\qq\q +2\lan  q_i(t),X_i(t)\ran\!+2\lan
r_i(t),u_i(t)\ran\]dt.\nonumber
\end{align}
Here, for $i\in \cM$, $ $
\begin{align*}\begin{cases}	
& \!\!\! \!\!\! \Upsilon_1(i):=\Upsilon(i),\q\Upsilon_2(i):=\Upsilon(i)+\bar \Upsilon(i),  \hb{ for }\Upsilon=A,B,C,D,Q,S,R,\\
&\!\!\! \!\!\! q_1(t)=q(t)-\Pi[q(t)],\q q_2(t)=\Pi[q(t)+\bar q(t)],\q  r_1(t)=r(t)-\Pi[r(t)],\q r_2(t)=\Pi[r(t)+\bar r(t)],\\
&\!\!\! \!\!\! x_1=\Pi_1[x],~x_2=\Pi_2[x].
\end{cases}\end{align*}

 After the decomposition, the set of admissible controls can be rewritten as
\begin{align*}\sU^{s,\imath,x}_{ad}[s,\i)&=\Big\{u(\cd)=u_1(\cd)\oplus u_2(\cd)\in\sU[s,\i)\bigm|\\
&\qq\qq X_1(\cd\,;s,\imath,x,u(\cd))\in L^2_{\dbF^\a}(s,\i;\dbR^n)^\perp,X_2(\cd\,;s,\imath,x,u(\cd))\in L^2_{\dbF^\a}(s,\i;\dbR^n)\Big\}.
\end{align*}
Then  MF-SDE \eqref{SDE1} together with the cost functional \eqref{cost1} is an equivalent formulation of  Problem (MF-LQ)$^\i$. In the sequel, we always focus on such an equivalent formulation only.

When the non-homogeneous terms in \eqref{SDE1} and  the linear weight terms in
\eqref{cost1}  are all 0, we have
\begin{align}\label{SDE10}\begin{cases}
&  \!\!\! \!\!\!dX_1(t)=[A_1(\a(t)) X_1(t)+B_1(\a(t))u_1(t)]dt\\
& \!\!\! \!\!\!\qq+[C_1(\a(t)) X_1(t)+C_2(\a(t)) X_2+D_1(\a(t))u_1(t)+D_2(\a(t))u_2(t)]dW(t),\\
&  \!\!\! \!\!\!dX_2(t)=[A_2(\a(t))X_2(t)+B_2(\a(t))u_2(t)]dt, \q t\in[s,\i),\\
& \!\!\! \!\!\! X_1(s)=x_1,\q X_2(s)=x_2, \q \a(s)=\iota,
\end{cases}\end{align}
and the cost function in \eqref{cost-non} reduces to
\begin{align}\label{cost10}
&J^{0,\infty}(s,\iota, x_1,x_2;u_1(\cd),u_2(\cd))\\
&\q=\sum_{i=1}^2\dbE  \int_s^\infty\[\lan  Q_i(\a(t))X_i(t),X_i(t)\ran\! +\!2\lan S_i(\a(t))X_i(t),u_i(t)\ran\!+\!\lan R_i(\a(t))u_i(t),u_i(t)\]dt.\nonumber
\end{align}
Then we denote  by  Problem (MF-LQ)$_0^\i$  the LQ problem associated with the above SDE \eqref{SDE10}  together with cost functional \eqref{cost10}.

\subsection{Martingale measure of $\a(\cd)$}
This subsection recalls a martingale measure constructed from  the Markov chain  $\a(\cd)$.
For $\imath\ne\jmath$,  define
$$\ba{ll}
\ns\ds\wt M_{\imath\jmath}(t):=\sum_{0<s\les t}{\bf1}_{[\a(s_-)=\imath]}{\bf1}_{[\a(s)=\jmath]}\equiv\hb{accumulative jump number from $\imath$ to $\jmath$ 
in $(0,t]$},\\
\ns\ds\lan\wt M_{\imath\jmath}\ran(t):=\int_0^t
\l_{\imath\jmath}{\bf1}_{[\a(s_-)=\imath]}ds,
\q M_{\imath\jmath}(t):=\wt M_{\imath\jmath}(t)-\lan\wt M_{\imath\jmath}\ran(t),\qq t\ges0.\ea$$
According to \cite{Rogers-Williams-2000}, p.35, (21.12) Lemma (see also \cite{Donnelly-Heunis-2012, Rolon Gutierrez-Nguyen-Yin-2024}), the above $M_{\imath\jmath}(\cd)$ is a purely discontinuous and square-integrable martingale (with respect to $\dbF^\a$). For convenience, we let
$$M_{\imath\imath}(t)=\wt M_{\imath\imath}(t)=\lan\wt M_{\imath\imath}\ran(t)=0,\qq t\ges0.$$
Then $\{M_{\imath\jmath}(\cd)\bigm|\imath,\jmath\in\cM\}$ is the {\it martingale measure} of Markov chain $\a(\cd)$.

\ms

To define the stochastic integral with respect to such a martingale measure, we need to introduce the following Hilbert spaces
\bel{MFspace}\left\{\ba{ll}
\ns\ds\!\!\! M^2_{\dbF_-}(s,T;\dbH)=\Big\{\f(\cd)=(\f(\cd\,,1),\cds,\f(\cd\,,m_0))\bigm|
\f(\cd\,,\imath)\hb{ is $\dbH$-valued and $\dbF$-predictable }\\
\ns\ds\qq\qq\qq\qq \hb{ with } \sum_{\iota\neq\jmath}\dbE\int_s^T|\f(r,\jmath)|^2d\wt M_{\imath\jmath}(r)
<\i,\q\forall\jmath\in\cM\Big\},\\
\ns\ds\!\!\! M^2_{\dbF^\a_-}(s,T;\dbH)=\Big\{\f(\cd)\in M^2_{\dbF_-}(s,T;\dbH)\bigm|\f(\cd) \hb{ is $\dbF^\a$-predictable} \Big\}.\ea\right.\ee
Clearly, $ M^2_{\dbF^\a_-}(s,T;\dbH)\subset  M^2_{\dbF_-}(s,T;\dbH)$. For any $\varphi(\cdot)\in  M^2_{\dbF_-}(s,T;\dbH)$, define
$$\int_s^t\f(r)dM(r):=\sum_{\imath\neq\jmath}\int_s^t\f(r,\jmath){\bf1}_
{[\a(r^-)=\imath]}dM_{\imath\jmath}(r),$$
which  is a (local) martingale with  quadratic variation
$$\dbE\(\int_s^t\f(r)dM(r)\)^2=\dbE\int_s^t\sum_{\imath\neq\jmath}|
\f(r,\jmath)|^2\l_{\imath\jmath}{\bf1}_{[\a(r)=\imath]}dr.$$
For any map $\Sigma:\cM\to\dbH$, we set $\Lambda[ \Sigma](\imath):=\sum\limits_{\jmath\in\cM}\lambda_{\imath\jmath}\Sigma(\jmath)$.

\subsection{Stabilizability}
	
In this subsection, we will recall the results on the stabilizability from \cite{Mei-Wei-Yong-2025}. The following definition is required.
\begin{definition}\label{Def-stab-1} (1).  $(\Th_1(\cd),\Th_2(\cd)):\cM\mapsto\dbR^{m\times n}\times \dbR^{m\times n}$ is said to be an {\it $L^2$-stabilizer} for  the following system (with $\a(t)$ suppressed)
\begin{align}\label{SDE10Th}\begin{cases}
&  \!\!\! \!\!\! dX_1^0(t)=(A_1+B_1\Th_1) X_1^0(t)dt +[(C_1+D_1{\Th_1})X_1^0(t) +(C_2+D_2\Th_2) X_2^0(t) ]dW(t),\\
& \!\!\! \!\!\! dX_2^0(t)= (A_2+B_2\Th_2) X_2^0(t)dt,\q t\in[s,\i),\\
& \!\!\! \!\!\!  X_1^0(s)=x_1,\q X_2^0(s)=x_2, \q \a(s)=\iota 	
\end{cases}\end{align}
if it admits the solution  $(X_1^0(\cd),X_2^0(\cd))\in L^2_{\dbF^\a}(s,\i;\dbR^n)^\perp\times L^2_{\dbF^\a}(s,\i;\dbR^n)$. 
		
(2).   $(\Th_1(\cd),\Th_2(\cd)):\cM\mapsto\dbR^{m\times n}\times \dbR^{m\times n}$ is said to be a {\it  dissipative strategy} of system \eqref{SDE10Th}  if there exist $P_1,P_2:\cM\mapsto\dbS^n_{++}$ such that, for any $\jmath\in\cM$,
\begin{equation}\label{dissiptcre}
\L[P_k]+(A_k+B_k\Th_k)^\top P_k+P_k(A_k+B_k\Th_k)+(C_k+D_k\Th_k)^\top P_1(C_k+D_k\Th_k)<0,\q k=1,2.
\end{equation}

	\end{definition}	
		
 Denote system \eqref{SDE10Th} by $[A_1,A_2, C_1,C_2; B_1,B_2,D_1,D_2]$ and write the set of all possible stabilizers by
$\BS[A_1,A_2, C_1,C_2; B_1,B_2,D_1,D_2].$
Then we have the following proposition	taken from 	 \cite{Mei-Wei-Yong-2025} which presents the equivalence of stabilizer and dissipative strategy.

\begin{proposition}\label{lemmaequista} \sl For system $[A_1,A_2, C_1,C_2; B_1,B_2,D_1,D_2]$, $(\Th_1,\Th_2)$ is an $L^2$-stabilizer  if and only if  $(\Th_1,\Th_2)$ is a  dissipative strategy.

\end{proposition}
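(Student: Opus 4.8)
The plan is to establish the equivalence through the standard Lyapunov/dissipativity correspondence for Markov-modulated linear systems, adapted to the decomposed pair $(X_1^0,X_2^0)$. First I note that the membership of $X_1^0$ in $L^2_{\dbF^\a}(s,\i;\dbR^n)^\perp$ and of $X_2^0$ in $L^2_{\dbF^\a}(s,\i;\dbR^n)$ is structural, inherited from the orthogonal decomposition and the initial data $x_1=\Pi_1[x]$, $x_2=\Pi_2[x]$; hence being an $L^2$-stabilizer reduces to the finiteness of $\int_s^\i\dbE\big(|X_1^0|^2+|X_2^0|^2\big)dt$ for every admissible initial triple. For the implication ``dissipative $\Rightarrow$ stabilizer'' I would use $P_1,P_2$ as Lyapunov functions. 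Applying the It\^o formula for Markov-modulated processes (whose drift picks up the generator term $\L[P_k]$ via the martingale measure of $\a(\cd)$) to $\langle P_k(\a(t))X_k^0(t),X_k^0(t)\rangle$ and taking expectations, the autonomous, diffusion-free $X_2^0$-equation yields a drift governed by $\L[P_2]+(A_2+B_2\Th_2)^\top P_2+P_2(A_2+B_2\Th_2)$; since $(C_2+D_2\Th_2)^\top P_1(C_2+D_2\Th_2)\ges0$ and $\cM$ is finite, \eqref{dissiptcre} with $k=2$ bounds this drift by $-\e|X_2^0|^2$, so integration over $[s,T]$ controls $\int_s^T\dbE|X_2^0|^2dt$ by $\langle P_2(\iota)x_2,x_2\rangle/\e$ uniformly in $T$. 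For $X_1^0$ the quadratic-variation term produces exactly the self-contained expression of \eqref{dissiptcre} with $k=1$, plus cross terms in $X_2^0$ arising from the $(C_2+D_2\Th_2)X_2^0$ part of the diffusion; these are absorbed by Young's inequality into a fraction of $-\e|X_1^0|^2$ and a multiple of $\dbE|X_2^0|^2$, already integrable, so a second integration gives $X_1^0\in L^2$.

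For the converse, the decisive structural observation is that \eqref{dissiptcre} with $k=1$ involves only $P_1$ and the $X_1$-data, so the two matrices may be constructed sequentially. Setting $x_2=0$ forces $X_2^0\equiv0$, and $X_1^0$ then solves the homogeneous equation $dX_1^0=(A_1+B_1\Th_1)X_1^0dt+(C_1+D_1\Th_1)X_1^0dW$, which is $L^2$-stable by hypothesis. I would define $P_1$ by the observability-type integral $\langle P_1(\iota)x,x\rangle:=\int_0^\i\dbE|X_1^0(t)|^2dt$ along the solution started from $(0,\iota,x)$: finiteness comes from stability, positive-definiteness from $X_1^0(0)=x$, and the time-homogeneity of $(\a(\cd),W(\cd))$ with the flow property gives $\int_t^\i\dbE|X_1^0(r)|^2dr=\dbE\langle P_1(\a(t))X_1^0(t),X_1^0(t)\rangle$. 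Differentiating at $t=0^+$ over all $(\iota,x)$ produces the strict identity $\L[P_1]+(A_1+B_1\Th_1)^\top P_1+P_1(A_1+B_1\Th_1)+(C_1+D_1\Th_1)^\top P_1(C_1+D_1\Th_1)=-I<0$.

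With $P_1$ fixed, I would set $G_2(\iota):=(C_2(\iota)+D_2(\iota)\Th_2(\iota))^\top P_1(\iota)(C_2(\iota)+D_2(\iota)\Th_2(\iota))\ges0$. The autonomous system $dX_2^0=(A_2+B_2\Th_2)X_2^0dt$ is $L^2$-stable (its solution does not depend on $X_1^0$), so defining $\langle P_2(\iota)x,x\rangle:=\int_0^\i\dbE\langle(G_2(\a(t))+I)X_2^0(t),X_2^0(t)\rangle dt$ along the solution from $(0,\iota,x)$ gives $P_2>0$ thanks to the $+I$ term, and the same flow/differentiation argument yields $\L[P_2]+(A_2+B_2\Th_2)^\top P_2+P_2(A_2+B_2\Th_2)=-(G_2+I)$, i.e.\ the left side of \eqref{dissiptcre} with $k=2$ equals $-I<0$. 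I expect the main obstacle to lie in this converse: rigorously justifying the integral representations, namely their finiteness and positivity, and especially the flow identity that upgrades mere $L^2$-integrability to the exact Lyapunov equation and hence to the strict matrix inequality, while correctly accounting for the switching generator term $\L[\cd]$ in the It\^o/Dynkin computation.
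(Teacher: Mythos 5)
A preliminary remark: the paper itself contains no proof of Proposition \ref{lemmaequista}; it is imported verbatim from \cite{Mei-Wei-Yong-2025}, so your attempt can only be measured against the standard argument — which is indeed the route you take: Lyapunov estimates for ``dissipative $\Rightarrow$ stabilizer'', and Gramian-type integral representations plus a flow/Dynkin differentiation for the converse. Your first direction is essentially correct: since $(C_2+D_2\Th_2)^\top P_1(C_2+D_2\Th_2)\ges0$, the $k=2$ inequality in \eqref{dissiptcre} forces $\L[P_2]+(A_2+B_2\Th_2)^\top P_2+P_2(A_2+B_2\Th_2)\les-\e I$, yielding $X_2^0\in L^2$; the It\^o expansion of $\lan P_1(\a(t))X_1^0(t),X_1^0(t)\ran$ then produces the $k=1$ expression plus coupling terms in $X_2^0$, which your Young-inequality absorption handles (alternatively, the genuine cross terms vanish in expectation because $\dbE[X_1^0(t)|\cF_t^\a]=0$).

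The converse as written has a concrete gap, located exactly where you yourself flag the main obstacle (finiteness of the integral representations). Your $P_1$ is built along the solution started from the \emph{deterministic} datum $(0,\iota,x)$ with $x_2=0$. But the $L^2$-stabilizer hypothesis only concerns admissible initial data: under the orthogonal decomposition, $x_1=\Pi_1[x]$ always satisfies $\dbE[x_1|\cF_s^\a]=0$, and at $s=0$ this forces $x_1=0$ (every $x\in L^2_{\cF_0}(\O;\dbR^n)$ is a function of $\a(0)$, since $\cF_0=\cF_0^W\vee\cF_0^\a\vee\cN$ with $\cF_0^W$ trivial). Moreover, a deterministic $x_1\ne0$ can never produce $X_1^0\in L^2_{\dbF^\a}(s,\infty;\dbR^n)^\perp$: the conditional mean $m(t)=\dbE[X_1^0(t)|\cF_t^\a]$ solves the random ODE $\dot m=(A_1+B_1\Th_1)(\a(t))m$ with $m(s)=x_1$, hence never vanishes. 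So the hypothesis, read correctly (as you do in your opening paragraph), gives no $L^2$ bound at all for the trajectories you integrate to define $P_1$, and neither the finiteness of $\lan P_1(\iota)x,x\ran$ nor the flow identity follows as written. The repair is short: fix $s>0$ and set $\xi:=\sgn(W(s))$, so that $\xi$ is $\cF_s$-measurable, $\xi^2=1$, and $\dbE[\xi|\cF_s^\a]=\dbE[\xi]=0$ by independence of $W(\cd)$ and $\a(\cd)$; then $x_1=\xi v$ is admissible, its solution is $X_1^0(t)=\xi\Phi(t)v$ with $\Phi$ the fundamental matrix of the homogeneous $X_1$-equation, and $|X_1^0(t)|^2=|\Phi(t)v|^2$ pointwise. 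Hence the stabilizer hypothesis applied to $\xi v$ gives exactly the finiteness of the deterministic-start Gramian $\int_s^\infty\dbE|\Phi(t)v|^2dt$; time-homogeneity of $(\a,W)$ makes the resulting $P_1$ independent of $s$, and the rest of your flow/differentiation argument — as well as the sequential construction of $P_2$, which has no such issue because $x=v$ deterministic yields the admissible pair $(x_1,x_2)=(0,v)$ — then goes through.
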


Therefore we introduce the following hypothesis.

\ms

\noindent {\bf (A2).} $\BS[A_1,A_2, C_1,C_2; B_1,B_2,D_1,D_2]\ne\varnothing$, or equivalently $(\widehat\Th_1(\cd),\widehat\Th_2(\cd))\in\BS[A_1,A_2, C_1,C_2; B_1,B_2,D_1,D_2]$.
 \ss

Next, picking any   $(\widehat\Th_1(\cd),\widehat\Th_2(\cd)) \in\BS[A_1,A_2, C_1,C_2; B_1,B_2,D_1,D_2]$, and $v(\cd)\equiv v_1(\cd)\oplus v_2(\cd)\in\sU[s,\i)$, we call them a {\it closed-loop strategy}. Take
\bel{outcome}u_k^{\widehat\Th_k,v_k}(t)\equiv u_k(t)=v_k(t)+\widehat\Th_k(\a(t))X_k(t),\qq t\ges0,\ee
which is called the {\it outcome} of $(\widehat\Th_1(\cd),\widehat\Th_2(\cd),v_1(\cd),v_2(\cd))$, where $X(\cd)\equiv X_1(\cd)\oplus X_2(\cd)$ is the state process \rf{SDE1}  corresponding to the control $u(\cd)=u_1(\cd)\oplus u_2(\cd)$. Any such a form control is called a {\it closed-loop control}.
	Recall the set $\sU^{s,\iota,x}_{ad}[s,\i)$, we have the
  following result, which  is from \cite{Mei-Wei-Yong-2025} (Proposition 4.5 therein).
\begin{proposition}\label{equivalence}  \sl Let {\bf(A2)} hold. 
Then for any $(s,\iota,x)\in\sD$,
\begin{equation}\label{su}\sU^{s,\iota,x}_{ad}[s,\i)=\Big\{u_1^{\widehat\Th_1,v_1}(\cd)\oplus u_2^{\widehat\Th_2,v_2}(\cd)\bigm|v_1(\cd)\oplus v_2(\cd)\in\sU[s,\i)\Big\}.
\end{equation}

\end{proposition}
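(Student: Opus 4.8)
The plan is to establish the claimed set equality by proving the two inclusions separately. Throughout I would fix a stabilizer $(\widehat\Th_1,\widehat\Th_2)\in\BS[A_1,A_2,C_1,C_2;B_1,B_2,D_1,D_2]$, which exists by {\bf(A2)}, and record at the outset that, since $\cM$ is finite, the maps $\widehat\Th_1(\cd),\widehat\Th_2(\cd)$ are bounded; this boundedness is what keeps the various $L^2$-memberships under control. The two sides differ only in \emph{how} a control is presented (directly, versus as an outcome of the stabilizer plus a free $v$), so the heart of the matter is a change of variables $u_k\leftrightarrow v_k$ together with a solvability statement for the stabilized state equation.

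First I would treat the inclusion $\subseteq$, which is elementary. Let $u(\cd)=u_1(\cd)\oplus u_2(\cd)\in\sU^{s,\iota,x}_{ad}[s,\i)$, so that the associated state $X(\cd)=X_1(\cd)\oplus X_2(\cd)$ of \eqref{SDE1} satisfies $X_1(\cd)\in L^2_{\dbF^\a}(s,\i;\dbR^n)^\perp$ and $X_2(\cd)\in L^2_{\dbF^\a}(s,\i;\dbR^n)$. I would set
\begin{equation*}
v_k(t):=u_k(t)-\widehat\Th_k(\a(t))X_k(t),\qquad k=1,2,
\end{equation*}
so that $u_k$ is by construction the outcome $u_k^{\widehat\Th_k,v_k}$. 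It then remains only to check that $v(\cd)=v_1(\cd)\oplus v_2(\cd)\in\sU[s,\i)$ and that this is the genuine orthogonal decomposition of $v$. Boundedness of $\widehat\Th_k(\a(\cd))$ and $X_k(\cd)\in L^2_\dbF(s,\i;\dbR^n)$ give $v_k(\cd)\in L^2_\dbF(s,\i;\dbR^m)$. For the decomposition I would use that $\widehat\Th_k(\a(t))$ is $\cF_t^\a$-measurable, so that
\begin{equation*}
\Pi\big[\widehat\Th_1(\a(\cd))X_1(\cd)\big](t)=\widehat\Th_1(\a(t))\,\dbE\big[X_1(t)\bigm|\cF_t^\a\big]=0,
\end{equation*}
because $X_1(\cd)\in L^2_{\dbF^\a}(s,\i;\dbR^n)^\perp$ means $\Pi[X_1]=0$; hence $v_1(\cd)\in L^2_{\dbF^\a}(s,\i;\dbR^m)^\perp$, while $\widehat\Th_2(\a(\cd))X_2(\cd)$ is $\dbF^\a$-adapted so $v_2(\cd)\in L^2_{\dbF^\a}(s,\i;\dbR^m)$. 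Thus $u(\cd)$ lies in the right-hand set.

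For the reverse inclusion $\supseteq$, let $v(\cd)=v_1(\cd)\oplus v_2(\cd)\in\sU[s,\i)$ and substitute $u_k=v_k+\widehat\Th_k(\a)X_k$ into \eqref{SDE1}, obtaining the nonhomogeneous, stabilized closed-loop system driven by the $L^2$-forcing terms $B_kv_k+b_k$ and $D_1v_1+D_2v_2+\sigma$. The crux is to show this system admits a solution $(X_1(\cd),X_2(\cd))\in L^2_{\dbF^\a}(s,\i;\dbR^n)^\perp\times L^2_{\dbF^\a}(s,\i;\dbR^n)$. Granting this, the outcome satisfies $u_k=v_k+\widehat\Th_k(\a)X_k\in L^2_\dbF(s,\i;\dbR^m)$ by boundedness of $\widehat\Th_k$, and since the resulting $X=X_1\oplus X_2$ is exactly the state of \eqref{SDE1} corresponding to this $u$, we conclude $u(\cd)\in\sU^{s,\iota,x}_{ad}[s,\i)$. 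To prove the $L^2$-solvability I would pass to the dissipativity characterization of Proposition \ref{lemmaequista}: the stabilizer supplies $P_1,P_2:\cM\to\dbS^n_{++}$ satisfying \eqref{dissiptcre}. Applying It\^o's formula to $\lan P_1(\a(t))X_1(t),X_1(t)\ran+\lan P_2(\a(t))X_2(t),X_2(t)\ran$ (the switching producing exactly the generator terms $\L[P_k]$ already present in \eqref{dissiptcre}), the strictly negative-definite left-hand sides of \eqref{dissiptcre} dominate the quadratic self-interaction terms — including the $X_2$-contribution $(C_2+D_2\widehat\Th_2)^\top P_1(C_2+D_2\widehat\Th_2)$ entering the $X_1$-diffusion — while the strictness leaves room to absorb the indefinite cross term and the forcing via Young's inequality, yielding the global $L^2$ a priori estimate; orthogonality is preserved since $X_2$ solves an $\dbF^\a$-adapted ODE and $X_1$ has vanishing $\cF_t^\a$-conditional mean.

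I expect the main obstacle to be precisely this $L^2$-solvability of the \emph{forced} closed-loop system, since the definition of $L^2$-stabilizer only directly controls the homogeneous system \eqref{SDE10Th}; upgrading it to the nonhomogeneous case requires the Lyapunov estimate just described (or, equivalently, citing the corresponding solvability result established in \cite{Mei-Wei-Yong-2025}). By contrast, the regime-switching and conditional mean-field features add only bookkeeping: the switching is encoded in the $\L[P_k]$ terms, and the mean-field coupling is already resolved by working along the orthogonal decomposition, where the two components are stabilized independently.
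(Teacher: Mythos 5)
The paper never proves this proposition: it is quoted directly from \cite{Mei-Wei-Yong-2025} (Proposition 4.5 therein), so there is no in-paper argument to compare against. Judged on its own merits, your reconstruction follows what is surely the intended route, and the $\subseteq$ half is complete as written: the change of variables $v_k=u_k-\widehat\Th_k(\a)X_k$, boundedness of $\widehat\Th_k$ (finite $\cM$), and the pull-out property $\Pi[\widehat\Th_1(\a)X_1]=\widehat\Th_1(\a)\Pi[X_1]=0$ correctly place $v_1\oplus v_2$ in $\sU[s,\i)$ with the right orthogonal decomposition. You also correctly identify where {\bf(A2)} actually does work, namely the $\supseteq$ half: the definition of $L^2$-stabilizer only controls the homogeneous system \eqref{SDE10Th}, so one must upgrade to the forced closed-loop system via the dissipativity characterization of Proposition \ref{lemmaequista} and a Lyapunov estimate.

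There is, however, one step in that Lyapunov estimate which would fail as literally stated. Expanding the quadratic variation of $X_1$ produces the cross term $2\,\dbE\lan P_1(C_1+D_1\widehat\Th_1)X_1,(C_2+D_2\widehat\Th_2)X_2\ran$, and you propose to absorb it, together with the forcing, ``via Young's inequality'' using the strictness of \eqref{dissiptcre}. But the strictness margin $\epsilon>0$ and the norms of the cross-term coefficients are both fixed by the data: Young's inequality splits the cross term as $\eta\,c_1|X_1|^2+\eta^{-1}c_2|X_2|^2$, and one needs $\eta c_1\les\epsilon$ and $\eta^{-1}c_2\les\epsilon$ simultaneously, i.e.\ $c_1c_2\les\epsilon^2$, which need not hold. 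Two standard repairs exist. (a) Use the orthogonality you already establish: $X_2$ and all coefficient matrices evaluated at $\a(t)$ are $\cF^\a_t$-measurable, while $\dbE[X_1(t)\,|\,\cF^\a_t]=0$ (this follows by conditioning the $X_1$-equation on $\cF^\a_t$, since $x_1$, $b_1$, $v_1$ all have vanishing conditional mean and the stochastic integral drops out by independence of $W$ and $\a$); hence the cross term vanishes identically in expectation. (b) Exploit the triangular structure: replace $P_2$ by $\kappa P_2$ with $\kappa$ large, which multiplies the $X_2$-slack by $\kappa$ without affecting the $X_1$-inequality, because the quadratic term in the $k=2$ inequality of \eqref{dissiptcre} involves $P_1$, not $P_2$, and is positive semidefinite. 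Since you derive $\dbE[X_1(t)\,|\,\cF^\a_t]=0$ anyway to conclude $X_1\in L^2_{\dbF^\a}(s,\i;\dbR^n)^\perp$, repair (a) costs nothing; with it your argument closes completely.
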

We note that the right-hand side in \eqref{su} is independent of the choice of $(s,\iota,x)\in\sD$.
Take a fixed $(\widehat\Th_1(\cd),\widehat\Th_2(\cd))\in\BS[A_1,A_2, C_1,C_2; B_1,B_2,D_1,D_2]$, for any $$u_k(t)= \widehat\Th_k(\a(t))X_k(t)+v_k(t),\q k=1,2,$$ we have
\begin{align}\label{costTh}
&J^\infty(s,\iota,x_1\oplus x_2; u_1(\cdot)\oplus u_2(\cdot))\\
%
&\q=\sum_{i=1}^2\dbE  \int_s^\infty\[\lan  Q^{\widehat\Th_i}_i(\a(t))X_i(t),X_i(t)\ran\! +\!2\lan S^{\widehat\Th_i}_i(\a(t))X_i(t),v_i(t)\ran\!+\!\lan R^{\widehat\Th_i}_i(\a(t))v_i(t),v_i(t)\ran\nonumber\\
&\qq\qq+2\lan  q^{\widehat\Th_i}_i(t),X_i(t)\ran\!+2\lan r^{\widehat\Th_i}_i(t),v_i(t)\ran\]dt\nonumber\\
&=:\widehat J^\infty(s,\iota,x_1\oplus x_2; v_1(\cdot)\oplus v_2(\cdot)),\nonumber
\end{align}
where
\begin{align*}
& Q_i^{\widehat\Th_i}=Q_i+\widehat\Th_i^\top S_i+S_i^\top\widehat\Th_i+\widehat\Th_i^\top R_i\widehat\Th_i,\q  S_i^{\widehat\Th_i}=S_i+R_i\widehat\Th_i,\\
&q^{\widehat\Th_i}_i(t)=q_i(t)+\widehat\Th^\top_i(\a(t))r_i(t),\q r^{\widehat\Th_i}_i(t)=r_i(t).
\end{align*}
Moreover, \eqref{SDE1} becomes
\begin{align}\label{SDE2}\begin{cases}
& \!\!\! \!\!\!  dX_1(t)=[A_1^{\widehat\Th_1}(\a(t)) X_1(t)+B^{\widehat\Th_1}_1(\a(t))v_1(t)+b_1(t)]dt\\
&\qq+[C^{\widehat\Th_1}_1(\a(t)) X_1(t)+C^{\widehat\Th_2}_2(\a(t)) X_2+D^{\widehat\Th_1}_1(\a(t))v_1(t)+D^{\widehat\Th_2}_2(\a(t))v_2(t)+\sigma(t)]dW(t),\\
& \!\!\! \!\!\!  dX_2(t)=[A^{\widehat\Th_2}_2(\a(t))X_2(t)+B^{\widehat\Th_2}_2(\a(t))v_2(t)+b_2(t)]dt, \q t\in[s,\i),\\
& \!\!\! \!\!\!  X_1(s)=x_1,\q X_2(s)=x_2, \q \a(s)=\iota,
\end{cases}\end{align}
where
$
 A_i^{\widehat\Th_i}=A_i+B_i\widehat\Th_i,\  C_i^{\widehat\Th_i}=C_i+D_i\widehat\Th_i,\ B_i^{\widehat\Th_i}=B_i,\ D_i^{\widehat\Th_i}=D_i.
$
At the same time, we can notice that {\bf (A2)} can be equivalently represented by$$(0,0)\in\BS[A_1^{\widehat\Th_1},A_2^{\widehat\Th_2}, C_1^{\widehat\Th_1},C_2^{\widehat\Th_2}; B_1^{\widehat\Th_1},B_2^{\widehat\Th_2},D_1^{\widehat\Th_1},D_2^{\widehat\Th_2}].$$
Observed from the above, we can state the main problem in our paper.\medskip

\no {\bf Problem (MF-LQ)$^{\infty}$.}  Let {\bf(A2)} hold. For any $(s,\iota,x_1\oplus x_2)\in\sD$, find a $u^*_1(\cd)\oplus  u^*_2(\cd)\in\sU^{s,\iota,x}_{ad}[s,\infty)$ or equivalently $v^*_1(\cd)\oplus  v^*_2(\cd)\in\sU[s,\infty)$ with $u_i^*(\cd)=\widehat \Th_i(\a(\cd))X_i(\cd)+v_i^*(\cd)$ such that
\begin{align*}V^{\infty}(s,\iota,x_1\oplus x_2)&:=J^\infty(s,\iota,x_1\oplus x_2;  u_1^*(\cdot)\oplus  u^*_2(\cdot))\\&=\inf_{u_1(\cdot)\oplus u_2(\cdot)\in \cU^{s,\iota,x}_{ad}[s,\infty)}J^\infty(s,\iota,x_1\oplus x_2; u_1(\cdot)\oplus u_2(\cdot))\\
&=\widehat J^\infty(s,\iota,x_1\oplus x_2;  v_1^*(\cdot)\oplus  v^*_2(\cdot))\\
&=\inf_{v_1(\cdot)\oplus v_2(\cdot)\in \sU[s,\infty)}\widehat  J^\infty(s,\iota,x_1\oplus x_2; v_1(\cdot)\oplus v_2(\cdot)).\end{align*}

Observed from the above, it suffices to solve Problem  (MF-LQ)$^{\infty}$ under the assumption that $(\widehat\Th_1,\widehat \Th_2)=(0,0)$ or equivalently
\ms

{\bf (A2)$'$} $(0,0)\in\BS[A_1,A_2, C_1,C_2; B_1,B_2,D_1,D_2]$.
\ms

In this case, we have $\sU^{s,\iota,x}_{ad}[s,\infty)=\sU[s,\i)$. Otherwise, we can work with the optimization problem \eqref{costTh} subject to \eqref{SDE2}. Therefore, when deriving the main results of the paper, we will assume {\bf (A2)$'$} first. Then we will present a parallel result under {\bf (A2)}. Now we present the following definition about Problem (MF-LQ)$^{\infty}$.

\bde{finite} \rm (1) Problem (MF-LQ)$^\i$ is said to be {\it finite} at $(s,\imath,x)\in\sD$ if $\cU^{s,\iota,x}_{ad}[s,\infty)\ne\varnothing$, and
\bel{>-i}\inf_{u(\cd)\in\sU^{s,\iota,x}_{ad}[s,\i)}J^\i(s,\imath,x ;u(\cd))>-\i.\ee
If the above is true for any $(s,\imath,x )\in\sD$, we simply say that Problem (MF-LQ)$^\i$ is {\it finite}.

\rm (2) A control $u^*(\cd)\in\sU^{s,\iota,x}_{ad}[s,\i)$ is called an {\it open-loop} optimal control of Problem (MF-LQ)$^\i$ for the initial $(s,\iota,x )\in\sD$,  if
\begin{equation*}
J^\i(s,\imath,x ;u^*(\cd))\les J^\i(s,\imath,x ;u(\cd))\text{ for any }u(\cd)\in \sU^{s,\iota,x}_{ad}[s,\i).
\end{equation*}
In this case, Problem (MF-LQ)$^\i$ is called {\it open-loop solvable at }$(t ,\imath ,x)\in\sD$. If Problem (MF-LQ)$_\i$ is open-loop solvable at all $(t,\imath ,x)\in\sD$, then Problem (MF-LQ)$^\i$ is called open-loop solvable.

\rm (3)
A strategy  $u_1^{\Th^*_1,v^*_1}(\cd)\oplus u_2^{\Th^*_2,v^*_2}(\cd)$ with $(\Th^*_1(\cd),\Th^*_2(\cd))\in\BS[A_1,A_2, C_1,C_2; B_1,B_2,D_1,D_2]$ and $v^*_1(\cd)\oplus v^*_2(\cd)\in\sU[s,\infty)$  is said be {\it closed-loop optimal}  if
\bel{J<J}J^\i(s,\imath,x ;u_1^{\Th^*_1,v^*_1}(\cd)\oplus u_2^{\Th^*_2,v^*_2}(\cd))\les
J^\i(s,\imath,x ;u_1^{\Th_1,v_1}(\cd)\oplus u_2^{\Th_2,v_2}(\cd)),\ee
for any $(\Th_1,\Th_2)\in\BS[A_1,A_2, C_1,C_2; B_1,B_2,D_1,D_2]$, $v_1(\cd)\oplus v_2(\cd)\in\sU[s,\i)$ and $(s,\iota,x )\in\sD$. When this happens, we say that Problem (MF-LQ)$^\i$ is {\it closed-loop solvable}.

\ede

Our main goal in the paper is to solve Problem (MF-LQ)$^{\infty}$ without the positive-definiteness condition of the cost functional, which was assumed in \cite{Mei-Wei-Yong-2025}.

\section{Finiteness}\label{sec:fin}

In this section, we will focus on the case when Problem  (MF-LQ)$^{\infty}$ is finite only. In this scenario, there exists a sequence of open-loop controls that is asymptotic optimal while there might not exist any optimal control. Our main contribution in this section lies in deriving the explicit forms for the asymptotic optimal sequence by systems of AREs and BSDEs. To achieve this, we proceed with some parallel propositions to Proposition 5.1 and 5.2 in \cite{Sun-Yong-2018}.
The proofs  are  thus omitted here.
  We emphasize that if {\bf(A2)$'$} is assumed, it follows that
$$\sU^{s,\iota,x}_{ad}[s,\infty)=\sU[s,\i),\text{ for any $(s,\iota,x)\in\sD$}.$$
We also note that we may use the  decompositions
$x=x_1\oplus x_2,~ u(\cd)=u_1(\cd)\oplus u_2(\cd)$ in the sequel.

\begin{proposition}\label{IN-new-rep-cost} \sl
 Suppose {\bf (A1)} and {\bf (A2)$'$} hold. For any $(s,\iota,x)\in\sD$ and $u(\cd)\in\sU[s,\i)$,  there  exist  bounded self-adjoint linear
operators
\begin{align*}&
\cK_2(s,\iota): L^2_{\dbF^\alpha}(0,\infty,\dbR^m)^\perp\times L^2_{\dbF^\alpha}(0,\infty,\dbR^m)\mapsto L^2_{\dbF^\alpha}(0,\infty,\dbR^m)^\perp\times L^2_{\dbF^\alpha}(0,\infty,\dbR^m),
\\
&
\cK_1(s,\iota):L_{\cF^\alpha_s}(\dbR^m)^\perp\times L_{\cF^\alpha_s}(\dbR^m)\mapsto  L^2_{\dbF^\alpha}(0,\infty,\dbR^m)^\perp\times L^2_{\dbF^\alpha}(0,\infty,\dbR^m)\\
&\cK_0(s,\iota):L_{\cF^\alpha_s}(\dbR^m)^\perp\times L_{\cF^\alpha_s}(\dbR^m)\mapsto L_{\cF^\alpha_s}(\dbR^m)^\perp\times L_{\cF^\alpha_s}(\dbR^m),\\
&\theta(s,\iota)\in  L^2_{\dbF^\alpha}(0,\infty,\dbR^m)^\perp\times L^2_{\dbF^\alpha}(0,\infty,\dbR^m)\\
& \vartheta(s,\iota)\in L_{\cF^\alpha_s}(\dbR^n)^\perp\times L_{\cF^\alpha_s}(\dbR^n),\q c(s,\iota)\in\dbR
\end{align*}  such that
\begin{align}\label{new-rep-cost-4}& J^\i(s,\iota,x;u(\cd))=\lan \cK_2(s,\iota)u,u\ran+\lan \cK_1(s,\iota)x,u\ran+\lan \cK_0(s,\iota)x,x\ran+2 \lan u, \theta(s,\iota)\ran +2\lan x,\vartheta(s,\iota) \ran   + c(s,\iota),\\
	& J^{\i,0}(s,\iota,x;u(\cd))=\lan \cK_2(s,\iota)u,u\ran+\lan \cK_1(s,\iota)x,u\ran+\lan \cK_0(s,\iota)x,x\ran\nonumber
    \end{align}

	%

\end{proposition}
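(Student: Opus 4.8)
The plan is to exploit the affine dependence of the state on the data together with the quadratic structure of the cost, and then to read off the operators by Riesz representation on the relevant Hilbert spaces. Fix $(s,\iota)\in[0,\i)\times\cM$. By linearity of the decomposed state equation \eqref{SDE1}, I would write its solution as $X=X^{\rm L}(x,u)+X^{\rm N}$, where $X^{\rm L}(x,u)=X_1^{\rm L}\oplus X_2^{\rm L}$ solves the homogeneous system \eqref{SDE10} with initial datum $x=x_1\oplus x_2$ and control $u=u_1\oplus u_2$, while $X^{\rm N}$ is the response to the nonhomogeneous terms $(b_1,b_2,\sigma)$ with zero initial datum and zero control. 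By superposition $X^{\rm L}$ is linear in $(x,u)$ and $X^{\rm N}$ is a fixed process independent of $(x,u)$. The essential quantitative input is that under {\bf (A2)$'$}, namely $(0,0)\in\BS[A_1,A_2,C_1,C_2;B_1,B_2,D_1,D_2]$, Proposition \ref{lemmaequista} furnishes a dissipative certificate $(P_1,P_2)$, and the associated Lyapunov estimate \eqref{dissiptcre} makes the maps $x\mapsto X^{\rm L}(x,0)$ and $u\mapsto X^{\rm L}(0,u)$ bounded into $L^2_\dbF(s,\i;\dbR^n)$ and guarantees $X^{\rm N}\in L^2_\dbF(s,\i;\dbR^n)$; in particular the infinite-horizon integrals in \eqref{cost1} converge.

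I would then substitute $X=X^{\rm L}(x,u)+X^{\rm N}$ into \eqref{cost1}, expand every quadratic integrand, and sort the resulting terms by their joint degree in $(x,u)$. The part quadratic in $u$, the part bilinear in $(x,u)$, and the part quadratic in $x$ define three bounded bilinear forms; the part linear in $u$ and the part linear in $x$ — collecting the cross terms against $X^{\rm N}$ together with the linear weights $q_i,r_i$ — define two bounded linear functionals; and the remaining $(x,u)$-free terms give the scalar $c(s,\iota)$. Boundedness of all forms follows from the operator bounds of the previous step together with the uniform boundedness of $Q_i,S_i,R_i$ (deterministic maps on the finite set $\cM$, hence bounded). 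Since the two genuinely quadratic forms are symmetric by construction, the Riesz representation theorem yields self-adjoint $\cK_2(s,\iota)$ on the control space and $\cK_0(s,\iota)$ on the initial-datum space, a bounded $\cK_1(s,\iota)$ from the initial-datum space to the control space, together with a vector $\th(s,\iota)$ in the control space and $\vartheta(s,\iota)$ in the initial-datum space representing the two linear functionals. This gives the first identity in \eqref{new-rep-cost-4}.

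For the homogeneous functional $J^{\i,0}$ the same computation applies with \eqref{SDE10} in place of \eqref{SDE1} and \eqref{cost10} in place of \eqref{cost1}: here $X^{\rm N}\equiv0$ and there are no linear weights, so both functionals and the constant vanish, while the three quadratic forms are unchanged because they depend only on the linear map $X^{\rm L}$ and on $Q_i,S_i,R_i$. Hence the second identity holds with the very same $\cK_2,\cK_1,\cK_0$. The main obstacle is the first step: one must show the control-to-state and initial-to-state maps are bounded on $L^2$ over the infinite horizon, which is exactly where stabilizability is used and which underlies convergence of the cost. These estimates are carried out as in the strictly convex case of \cite{Mei-Wei-Yong-2025} and parallel Propositions 5.1 and 5.2 of \cite{Sun-Yong-2018}, so the routine details may be omitted.
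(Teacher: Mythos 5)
Your proposal is correct and follows essentially the route the paper intends: the paper itself omits the proof, deferring to Propositions 5.1 and 5.2 of \cite{Sun-Yong-2018}, and the argument there is exactly your scheme of superposition ($X=X^{\rm L}(x,u)+X^{\rm N}$), expansion of the quadratic cost, and Riesz representation of the resulting bounded forms. The one step that genuinely needs justification --- $L^2$-boundedness over the infinite horizon of the data-to-state and control-to-state maps under {\bf (A2)$'$} --- you correctly reduce to the dissipativity certificate of Proposition \ref{lemmaequista} via the Lyapunov estimate \eqref{dissiptcre}, which is the same mechanism the cited references use.
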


\begin{proposition}\label{solvab} \sl Suppose {\bf (A1)} and {\bf (A2)$'$} hold.

\ms

{\rm (i)} Problem {\rm (MF-LQ)$^{\infty}$} is open-loop solvable at $(s,\iota,x)\in\sD$ if and only if $\cK_2(s,\iota)\ges 0$, $\theta(s,\iota)+\cK_1(s,\iota)x\in\sR(\cK_2(s,\iota))$. In this case, $u^*(\cd)=u_1^*(\cd)\oplus u_2^*(\cd)$ is the optimal control if and and if %
$$\cK_2(s,\iota)u^*(\cd)+\cK_1(s,\iota)x+\theta(s,\iota)=0.$$

{\rm (ii)} If Problem {\rm (MF-LQ)$^{\infty}$} is open-loop solvable, then so is Problem {\rm(MF-LQ)$^0_{\infty}$}. Moreover, $J^0_\infty(s,\iota,x_1\oplus x_2,u_1(\cd)\oplus u_2(\cd))$ is convex over  $u_1(\cd)\oplus u_2(\cd)\in L^2_{\dbF^\alpha}(0,\infty;\dbR^m)^\perp\times L^2_{\dbF^\alpha}(0,\infty;\dbR^m).$

\ms

{\rm (iii)} If Problem {\rm (MF-LQ)$_0^{\infty}$} is open-loop solvable, then there exists a linear operator $$U^*(s,\iota): L_{\cF^\alpha_s}(\dbR^n)^\perp\times L_{\cF^\alpha_s}(\dbR^n)\mapsto L^2_{\dbF^\a}(0,\infty;\dbR^{m})^\perp\times L^2_{\dbF^\a}(0,\infty;\dbR^{m})$$ such that $u^*(\cd)= [U^*(s,\iota)x](\cd) $
is the optimal open-loop control for Problem {\rm (MF-LQ)$_0^{\infty}$}.
\end{proposition}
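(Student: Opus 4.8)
The plan is to deduce all three parts from the quadratic representation in Proposition \ref{IN-new-rep-cost} together with the abstract theory of minimizing a quadratic functional on a Hilbert space. Under {\bf (A2)$'$} the admissible set is the Hilbert space $\sU[s,\i)=L^2_{\dbF^\a}(0,\i;\dbR^m)^\perp\times L^2_{\dbF^\a}(0,\i;\dbR^m)$, and by Proposition \ref{IN-new-rep-cost} the map $u(\cd)\mapsto J^\i(s,\iota,x;u(\cd))$ is a continuous quadratic functional whose $u$-linear part we collect as $2\lan u,h\ran$, i.e.
$$J^\i(s,\iota,x;u)=\lan\cK_2u,u\ran+2\lan u,h\ran+c_0,\qq h:=\cK_1(s,\iota)x+\theta(s,\iota),$$
with $\cK_2=\cK_2(s,\iota)$ bounded and self-adjoint and $c_0$ independent of $u$. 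I would first record the abstract fact and then specialize it: for a bounded self-adjoint $\cK_2$ on a Hilbert space, $Q(u)=\lan\cK_2u,u\ran+2\lan u,h\ran+c_0$ attains its infimum if and only if $\cK_2\ges0$ and $h\in\sR(\cK_2)$, in which case the minimizers are exactly the solutions of $\cK_2u^*+h=0$.

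For part (i) this abstract fact applies directly. If a minimizer $u^*$ exists, the first-order condition $\frac{d}{d\e}J^\i(s,\iota,x;u^*+\e v)\big|_{\e=0}=0$ for every $v$ gives $\cK_2u^*+h=0$, whence $h\in\sR(\cK_2)$; using this stationarity, the exact expansion reads $J^\i(s,\iota,x;u^*+v)-J^\i(s,\iota,x;u^*)=\lan\cK_2v,v\ran$, and nonnegativity of the left side forces $\cK_2\ges0$. Conversely, if $\cK_2\ges0$ and $h\in\sR(\cK_2)$, pick $u^*$ with $\cK_2u^*=-h$; the same expansion gives $J^\i(s,\iota,x;u)-J^\i(s,\iota,x;u^*)=\lan\cK_2(u-u^*),u-u^*\ran\ges0$, so $u^*$ is optimal and every optimal control solves $\cK_2u^*+h=0$. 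Substituting $h=\cK_1x+\theta$ recovers the stated conditions and optimality equation exactly. (If $\cK_2\not\ges0$, choosing $u_0$ with $\lan\cK_2u_0,u_0\ran<0$ makes $J^\i(s,\iota,x;\l u_0)\to-\i$ as $\l\to\i$, so no minimizer exists.)

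For part (ii) I would exploit that Proposition \ref{IN-new-rep-cost} supplies the \emph{same} operators $\cK_2,\cK_1,\cK_0$ for $J^\i$ and $J^{\i,0}$, the homogeneous problem being the case $\theta=0$, $\vartheta=0$, $c=0$. Open-loop solvability of Problem (MF-LQ)$^\i$ at every triple forces $\cK_2\ges0$ and, by part (i), $\cK_1x+\theta\in\sR(\cK_2)$ for all $x$; taking $x=0$ gives $\theta\in\sR(\cK_2)$, and subtracting leaves $\cK_1x\in\sR(\cK_2)$ for every $x$. Applying part (i) to the homogeneous cost (i.e. $h=\cK_1x$) then yields open-loop solvability of Problem (MF-LQ)$^0_\i$; convexity of $u\mapsto J^{\i,0}(s,\iota,x;u)$ is immediate since its second variation is the nonnegative form $2\lan\cK_2\cd,\cd\ran$. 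For part (iii), part (i) shows the optimal controls of the homogeneous problem are precisely the solutions of $\cK_2u^*+\cK_1x=0$, and we have just seen $\cK_1x\in\sR(\cK_2)$ for all $x$; I would then set $U^*(s,\iota):=-\cK_2^\dagger\cK_1(s,\iota)$, where $\cK_2^\dagger$ is the Moore--Penrose pseudoinverse of $\cK_2$, well-defined and linear on $\sR(\cK_2)\supseteq\sR(\cK_1)$, so that $u^*(\cd)=[U^*(s,\iota)x](\cd)\in\Ker(\cK_2)^\perp$ satisfies $\cK_2u^*+\cK_1x=0$ and is an optimal open-loop control.

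The only genuine obstacle I anticipate is bookkeeping forced by the infinite horizon: since $\sR(\cK_2)$ need not be closed, the pseudoinverse $\cK_2^\dagger$ is in general unbounded, so $U^*$ is merely a (possibly unbounded) linear operator---this is exactly why the statement asks for a linear, not bounded-linear, $U^*$. Once Proposition \ref{IN-new-rep-cost} furnishes a bounded self-adjoint $\cK_2$ on a bona fide Hilbert space, every remaining step is the standard quadratic-minimization argument (parallel to Propositions 5.1--5.2 of \cite{Sun-Yong-2018}), with no new difficulty arising from the switching or conditional mean-field structure.
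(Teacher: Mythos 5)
Your proof is correct and is essentially the paper's own approach: the paper omits the proof of this proposition, declaring it parallel to Propositions 5.1--5.2 of Sun--Yong (2018), and those are proved by exactly the abstract Hilbert-space quadratic-minimization argument you give (the stationarity equation $\cK_2u^*+h=0$, the expansion $J(u^*+v)-J(u^*)=\lan\cK_2 v,v\ran$ forcing $\cK_2\ges0$, the $x=0$ trick to isolate $\theta\in\sR(\cK_2)$, and the pseudoinverse construction $U^*=-\cK_2^\dagger\cK_1$). The only discrepancy is the missing factor of $2$ on the cross term in \eqref{new-rep-cost-4}, which is a notational inconsistency in the paper's own representation rather than a flaw in your argument.
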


To find the asymptotic optimal sequence of optimal controls, first  we notice that  from \eqref{new-rep-cost-4}, it must hold that $\cK_2(s,\iota)\ges 0$ when Problem (MF-LQ)$^{\infty}$ is finite, i.e.
    $J_\infty^0(s,\iota,x_1\oplus x_2, u_1(\cd)\oplus u_2(\cd))$  is convex over $u_1(\cd)\oplus u_2(\cd)\in\sU[s,\infty).$ To gain the strict convexity,
we propose the following regularized cost functionals with $\d>0$:
\begin{align}\label{hcost20-d}&J_{\d}^\infty(s,\iota,x_1\oplus x_2;u_1(\cd)\oplus u_2(\cd))
	=J^\infty(s,\iota,x_1\oplus x_2;u_1(\cd)\oplus u_2(\cd))+\d\dbE\int_s^\infty\sum_{i=1}^2|u_i(t)|^2dt,\\
	&J^{0,\infty}_{\d}(s,\iota,x_1\oplus x_2;u_1(\cd)\oplus u_2(\cd))
	=J^{0,\infty}(s,\iota,x_1\oplus x_2;u_1(\cd)\oplus u_2(\cd))+\d\dbE\int_s^\infty\sum_{i=1}^2|u_i(t)|^2dt.\nonumber\end{align}
Then,  a regularized control problem of  Problem (MF-LQ)$^\i$ can be formulated as follows.

\ss

\no {\bf Problem (MF-LQ)$_{\d}^\infty$:} Let {\bf (A2)$'$} hold. Given $( s,\iota,x_1\oplus x_2)\in\cD$, find a $ u_{1,\d}^*(\cd)\oplus u_{2,\d}^*(\cd)\in\sU[s,\infty)$ such that
\begin{align*}V_{\d}^\infty(s,\iota,x_1\oplus x_2)&:=J_{\d}^\infty(s,\iota,x_1\oplus x_2; u_{1,\d}^*(\cd)\oplus u_{2,\d}^*(\cd))\\
&=\inf_{u_1(\cdot)\oplus u_2(\cdot)\in \sU[s,\infty)}J_{\d}^\infty(s,\iota,x_1\oplus x_2; u_1(\cdot)\oplus  u_2(\cdot)).\end{align*}

Due to the strict convexity,  Problem (MF-LQ)$_{\d}^{\infty}$ admits a unique open-loop optimal control $u_{1,\d}^*(\cd)\oplus u_{2,\d}^*(\cd)\in\sU[s,\infty)$. It is natural to show such a sequence is asymptotic optimal for  Problem (MF-LQ)$^{\infty}$ in the following proposition.

\begin{proposition} \sl Suppose   {\bf(A2)$'$} hold and Problem {\rm(MF-LQ)$^{\infty}$} is finite.
	
{\rm (1).} It follows that \begin{align}\label{cJd0}&\lim_{\d\rightarrow 0}V_{\d}^\infty(s,\iota, x_1\oplus x_2)=V^\infty(s,\iota,x_1\oplus x_2).
\end{align}
	
{\rm (2).} The sequence of optimal controls $ u_{1,\d}^*(\cd)\oplus u_{2,\d}^*(\cd)\in\sU[s,\infty)$ for  Problem (MF-LQ)$_{\d}^{\infty}$, is asymptotic optimal for  Problem (MF-LQ)$^{\infty}$ as $\d\rightarrow 0^+$ at $(s,\iota,x)\in\sD$
	
\end{proposition}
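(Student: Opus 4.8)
The plan is to compare the two cost functionals through the defining identity
$J_{\d}^\infty(s,\iota,x_1\oplus x_2;u_1\oplus u_2)=J^\infty(s,\iota,x_1\oplus x_2;u_1\oplus u_2)+\d\,\dbE\int_s^\infty\sum_{i=1}^2|u_i(t)|^2dt$
and to exploit the nonnegativity of the penalty together with the finiteness hypothesis. Finiteness gives two things I will use repeatedly: $V^\infty(s,\iota,x_1\oplus x_2)>-\infty$, and the convexity $\cK_2(s,\iota)\ges0$ that makes the regularized quadratic form $\cK_2(s,\iota)+\d I\ges\d I$ uniformly positive, so that by the strict convexity asserted before the statement the minimizer $u_{1,\d}^*(\cd)\oplus u_{2,\d}^*(\cd)$ exists and is unique. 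I work under {\bf(A2)$'$}, whence $\sU^{s,\iota,x}_{ad}[s,\infty)=\sU[s,\infty)$; in particular every feasible control has finite penalty and the zero control is admissible, so $V_{\d}^\infty(s,\iota,x_1\oplus x_2)\les J^\infty(s,\iota,x_1\oplus x_2;0)<\infty$ and each $V_{\d}^\infty$ is a genuine real number.

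For part (1), I would first note the pointwise bound $J_{\d}^\infty(\cd)\ges J^\infty(\cd)$, which upon taking the infimum yields $V_{\d}^\infty\ges V^\infty$ for every $\d>0$; moreover $J_{\d'}^\infty\les J_{\d}^\infty$ whenever $0<\d'<\d$, so $\d\mapsto V_{\d}^\infty$ is nondecreasing and $\lim_{\d\to0^+}V_{\d}^\infty$ exists and dominates $V^\infty$. For the reverse inequality I fix $\eps>0$ and pick a near-optimal $u^\eps(\cd)\in\sU[s,\infty)$ with $J^\infty(s,\iota,x_1\oplus x_2;u^\eps)\les V^\infty+\eps$; since $u^\eps\in L^2_\dbF(s,\infty;\dbR^m)$ its penalty is a fixed finite constant, so $V_{\d}^\infty\les J_{\d}^\infty(s,\iota,x_1\oplus x_2;u^\eps)\les V^\infty+\eps+\d\,\dbE\int_s^\infty\sum_i|u_i^\eps(t)|^2dt$. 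Sending $\d\to0^+$ and then $\eps\to0^+$ gives $\limsup_{\d\to0^+}V_{\d}^\infty\les V^\infty$, which combined with the lower bound proves \rf{cJd0}.

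For part (2), the asymptotic optimality, understood as $\lim_{\d\to0^+}J^\infty(\cd\,;u_{1,\d}^*\oplus u_{2,\d}^*)=V^\infty$, follows by a squeeze. Since $u_{1,\d}^*\oplus u_{2,\d}^*$ minimizes $J_{\d}^\infty$, the identity above gives $J^\infty(s,\iota,x_1\oplus x_2;u_{1,\d}^*\oplus u_{2,\d}^*)=V_{\d}^\infty(s,\iota,x_1\oplus x_2)-\d\,\dbE\int_s^\infty\sum_i|u_{i,\d}^*(t)|^2dt\les V_{\d}^\infty(s,\iota,x_1\oplus x_2)$, while admissibility of $u_{1,\d}^*\oplus u_{2,\d}^*$ for the original problem yields $J^\infty(s,\iota,x_1\oplus x_2;u_{1,\d}^*\oplus u_{2,\d}^*)\ges V^\infty(s,\iota,x_1\oplus x_2)$. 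Thus $V^\infty\les J^\infty(\cd\,;u_{1,\d}^*\oplus u_{2,\d}^*)\les V_{\d}^\infty$, and part (1) forces both ends to $V^\infty$; as a byproduct $\d\,\dbE\int_s^\infty\sum_i|u_{i,\d}^*|^2dt\to0$.

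The analytic content is light: the inputs are the explicit penalty structure, the convexity $\cK_2(s,\iota)\ges0$ supplied by finiteness, and the existence of the regularized minimizers. The genuine difficulty is conceptual rather than technical — this proposition delivers convergence of the optimal \emph{values} and of the \emph{costs} of the minimizers, and one must resist concluding convergence of the controls themselves. Indeed $\{u_{1,\d}^*\oplus u_{2,\d}^*\}$ need not converge in $\sU[s,\infty)$ and Problem {\rm(MF-LQ)$^\infty$} may admit no open-loop optimal control at all; exhibiting that gap (through the promised example and the subsequent solvability analysis) is where the real work of the paper lies.
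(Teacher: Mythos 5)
Your proof is correct and follows essentially the same route as the paper's: part (1) via a sandwich argument with an $\eps$-optimal (resp.\ $\tfrac1i$-optimal) control for the unregularized problem, exploiting that its penalty term is a fixed finite constant, and part (2) via the squeeze $V^\infty\les J^\infty(\cdot\,;u^*_{1,\d}\oplus u^*_{2,\d})\les V_\d^\infty$ combined with part (1). The additional observations you record (monotonicity of $\d\mapsto V_\d^\infty$, finiteness of each $V_\d^\infty$, and the vanishing of $\d\,\dbE\int_s^\infty\sum_i|u^*_{i,\d}|^2dt$) are correct refinements that the paper leaves implicit.
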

\begin{proof} (1).
There exists a  sequence of $u_1^i(\cd)\oplus u_2^i(\cd)\in \sU^{s,\iota,x}_{ad}[s,\i)=\sU[s,\infty)$ such that
\begin{align*}
&V_{\d}^\infty(s,\iota, x_1\oplus x_2)\ges V^{\i}(s,\iota, x_1\oplus x_2) \\
&\q=\inf\limits_{u_1(\cd)\oplus u_2(\cd)\in\sU[s,\i)}J^\infty(s,\iota, x_1\oplus x_2; u_1(\cdot)\oplus  u_2(\cdot))\ges J^\infty(s,\iota,x_1\oplus x_2;u_1^i(\cdot)\oplus  u_2^i(\cdot))-\frac1i\\
&\q= J_{\d}^\infty(s,\iota,x_1\oplus x_2;u_1^i(\cdot)\oplus  u_2^i(\cdot))-\d\dbE\int_s^\infty\big[|u_1^i(t)|^2+u_2^i(t)|^2\big]dt-\frac1i \\
&\q\ges  \inf\limits_{u_1(\cd)\oplus u_2(\cd)\in\sU[s,\i)}J_\d^\infty (s,\iota, x_1\oplus x_2; u_1(\cdot)\oplus  u_2(\cdot))-\d\dbE \int_s^\infty\big[|u_1^i(t)|^2+u_2^i(t)|^2\big]dt-\frac1i\\
&\q=V_{\d}^\infty(s,\iota, x_1\oplus x_2)-\d\dbE \int_s^\infty\big[|u_1^i(t)|^2+u_2^i(t)|^2\big]dt-\frac1i.
\end{align*}
Letting $\d\to 0$ and then $i\to\i$, we  get the desired result.\ss
	
(2). By \eqref{cJd0}, we have
$$\ba{ll}\ds\lim_{\d\rightarrow 0}J^\infty(s,\iota,x_1\oplus x_2;  u_{1,\d}^*(\cd)\oplus   u^*_{2,\d}(\cd))\les\lim_{\d\rightarrow 0}J_{\d}^\infty(s,\iota,x_1\oplus x_2;  u^*_{1,\d}(\cd)\oplus u_{2,\d}^*(\cd))\\
\ns\ds=\inf_{ u_1(\cdot)\oplus u_2(\cdot)\in \sU[s,\infty)}J^\infty(s,\iota,x_1\oplus x_2; u_1(\cdot)\oplus u_2(\cdot))>-\infty.\ea$$
This proves $ u_{1,\d}^*(\cd)\oplus u_{1,\d}^*(\cd)\in\sU[s,\infty)$  is asymptotic optimal for  Problem (MF-LQ)$^{\infty}$.
\end{proof}

We have constructed the required sequence of asymptotic optimal controls only while we have little knowledge about its form.  The following theorem  derives an explicit representation of $ u_{1,\d}^*(\cd)\oplus u_{2,\d}^*(\cd)\in\sU[s,\infty)$  using AREs and BSDEs.  We will omit the proof as it is  parallel to that of Theorem 5.1 in \cite{Mei-Wei-Yong-2025} under strict convexity instead of positive definiteness. 

\begin{theorem}\label{P-i-d-opti} \sl Suppose  {\bf (A2)$'$} hold and Problem {\rm (MF-LQ)$^{\infty}$} is finite. The following is true.\ss
	
{\rm (1)} The following algebraic equation
\begin{align}\label{BSAREPPP}\begin{cases}&\!\!\!\!\!\! \L[P^*_{i,\d}]+P^*_{i,\d}A_i+A_i^\top P^*_{i,\d}+C_i^\top P^*_{1,\d}C_i +Q_i\\
&\!\!\!\!\!\!\qq-(B_i^\top P^*_{i,\d}+ D_i^\top P^*_{1,\d} C_i+ S_i) ^\top( R_i+D_i^\top P^*_{1,\d}D_i+\d I)^{-1}(B_i^\top P^*_{i,\d}+ D_i^\top P^*_{1,\d} C_i+ S_i)=0,\\
&\!\!\!\!\!\!R_i+D_i^\top P^*_{1,\d}D_i\ges 0,\q i=1,2,\end{cases}\end{align}
admits a unique solution  $(P^*_{1,\d},P^*_{2,\d}):\cM\mapsto\dbS^n\times \dbS^n$ such that $(\Th^*_{1,\d},\Th^*_{2,\d})\in\BS[A_1,A_2, C_1,C_2; B_1,B_2,D_1,D_2]$ with
\bel{optimalThhomo}
\Th^*_{i,\d}:=-( R_i+D_i^\top P^*_{1,\d}D_i+\d I)^{-1}(B_i^\top P^*_{i,\d}+ D_i^\top P^*_{1,\d} C_i+ S_i),\q i=1,2.\\
\eel

{\rm(2)} There exist  $(\pi^*_{1,\d},\eta^*_{\d}, \nu^*_{1,\d})\in L^2_{\dbF^\a}(s,\infty;\dbR^n)^\perp\times  L_{\dbF}^2(s,\infty;\dbR^n)\times M_{\dbF^\a_-}^2(s,\infty;\dbR^n)^\perp$ and  $(\pi^*_{2,\d},\nu^*_{2,\d})\in L^2_{\dbF^\a}( s,\infty;\dbR^n) \times M_{\dbF^\a_-}^2(s,\infty;\dbR^n) $ satisfying the following infinite horizon BSDEs on $[s,\i)$
\bel{eqpiooo0}\left\{\ba{ll}
\ns\ds\!\!\! d\pi^*_{1,\d}(t)-\eta^*_{\d}(t)dW(t)-\nu^*_{1,\d}dM(t) \\
\ns\ds\q + \[(A_1^{\Th^*_{1,\d}})^\top\pi^*_{1,\d}\!+\!(C_1^{\Th^*_{1,\d}})^\top (P^*_{1,\d}\sigma_1\!+\!\Pi[\eta^*_{\d}])\!+\!P^*_{1,\d}b_1\!+\!q_1\!+\!{\Th^*_{1,\d}}^\top r_1\]dt=0,\\
\ns\ds\!\!\!\! d\pi^*_{2,\d}(t)\!-\!\nu^*_{2,\d}dM(t) \!+\! \[ (A_2^{\Th^*_{1,\d}})^\top\pi^*_{1,\d}\!+\!(C_2^{\Th^*_{2,\d}})^\top (P^*_{1,\d}\sigma_2\!+\!\Pi^\perp[\eta^*_{\d}])\!+\!P^*_{2,\d} b_2\!+\!q_2\!+\!{\Th^*_{2,\d}}^\top r_2\] dt=0,\\
\ns\ds\!\!\!\lim_{t\rightarrow\infty}\dbE|\pi^*_{i,\d}(t)|^2=0.\ea\right.\eel

{\rm (3)} The  optimal control for  Problem (MF-LQ)$_{\d}^{\infty}$ is
\bel{v-id}
u^*_{i,\d}(\cd) =\Th^*_{i,\d}X_i(\cd)-( R_i+D_i^\top P^*_{1,\d}D_i+\d I)^{-1} \[B_i^\top\pi^*_{i,\d}\!+\!D_i^\top\Pi_i[\eta^*_{\d}]\!+\!D_i^\top P^*_{1,\d}\sigma_i\!+\!r_i\],\q i=1,2,
\ee
which is asymptotic optimal for  Problem (MF-LQ)$^{\infty}$.
\end{theorem}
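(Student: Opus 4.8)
The plan is to reduce the regularized problem to a \emph{strictly convex} problem to which the machinery of \cite{Mei-Wei-Yong-2025} applies essentially verbatim. First I would observe that Problem (MF-LQ)$_\d^\i$ is nothing but Problem (MF-LQ)$^\i$ with the control weights $R_i$ replaced by $R_i^\d := R_i + \d I$ ($i=1,2$), since the extra term $\d\,\dbE\int_s^\i\sum_i|u_i|^2dt$ in \rf{hcost20-d} is absorbed precisely into the quadratic control cost after the orthogonal decomposition. Because Problem (MF-LQ)$^\i$ is finite, the representation \rf{new-rep-cost-4} together with Proposition \ref{solvab} forces $\cK_2(s,\iota)\ges 0$, i.e. $J^{0,\i}$ is convex in $u$; adding the regularization gives the associated operator $\cK_2(s,\iota)+\d I\ges\d I>0$, so $J_\d^{0,\i}$ is \emph{uniformly convex} with modulus $\d$. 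Thus the regularized problem enjoys strict convexity even though the original weights are indefinite.

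Next I would invoke the strict-convexity version of Theorem 5.1 of \cite{Mei-Wei-Yong-2025}, whose validity under mere strict convexity (rather than the positive-definiteness of Assumption (A3) there) is exactly the extension announced in the Introduction. Applied to Problem (MF-LQ)$_\d^\i$ with weights $R_i^\d$, this yields simultaneously: the unique stabilizing solution $(P^*_{1,\d},P^*_{2,\d})$ of the ARE system \rf{BSAREPPP}, the well-posedness of the infinite-horizon BSDEs \rf{eqpiooo0} with the decay condition at infinity, and the optimal feedback \rf{v-id}. The role of the $\d I$ is transparent here: it guarantees that the effective control weight $R_i+D_i^\top P^*_{1,\d}D_i+\d I\ges\d I>0$ is invertible even though the constraint in \rf{BSAREPPP} only requires $R_i+D_i^\top P^*_{1,\d}D_i\ges 0$, so the feedback gain \rf{optimalThhomo} is always well-defined.

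The step I expect to be the main obstacle is establishing the stabilizing solvability of the ARE \rf{BSAREPPP} under uniform convexity alone. Here I would argue that uniform convexity together with the stabilizability hypothesis (A2)$'$ produces a stabilizing solution: approximating \rf{BSAREPPP} by its finite-horizon counterparts and using the $\ges\d I$ lower bound to obtain uniform a priori bounds and monotonicity, one passes to the limit to recover $(P^*_{1,\d},P^*_{2,\d})$, and the dissipativity criterion \rf{dissiptcre} of Definition \ref{Def-stab-1} together with Proposition \ref{lemmaequista} certifies that the induced $(\Th^*_{1,\d},\Th^*_{2,\d})$ lies in $\BS[A_1,A_2,C_1,C_2;B_1,B_2,D_1,D_2]$. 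This is precisely where the strict-convexity assumption does the work that positive-definiteness did in \cite{Mei-Wei-Yong-2025}.

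Finally, with the Riccati solution and the BSDE solution in hand, I would apply It\^o's formula to $\sum_i\big(\lan P^*_{i,\d}(\a(t))X_i(t),X_i(t)\ran+2\lan\pi^*_{i,\d}(t),X_i(t)\ran\big)$ along an arbitrary admissible state and complete the square. The ARE \rf{BSAREPPP} annihilates the quadratic cross terms while the BSDE drift in \rf{eqpiooo0} is chosen to cancel the linear terms, so $J_\d^\i$ reduces to a perfect square in $u_i-u^*_{i,\d}$ plus a constant, identifying \rf{v-id} as the unique minimizer; the decay $\lim_{t\to\i}\dbE|\pi^*_{i,\d}(t)|^2=0$ is exactly what makes the boundary term at infinity vanish over the infinite horizon. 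The asymptotic optimality of $u^*_{1,\d}(\cd)\oplus u^*_{2,\d}(\cd)$ for Problem (MF-LQ)$^\i$ then follows immediately from the preceding proposition.
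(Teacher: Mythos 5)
Your proposal is correct and takes essentially the same route as the paper: the paper omits the proof of this theorem precisely on the grounds you identify, namely that the regularization amounts to replacing $R_i$ by $R_i+\d I$, that finiteness forces $\cK_2(s,\iota)\ges0$ so the regularized functional is uniformly convex, and that the conclusion is then the strict-convexity extension of Theorem 5.1 in \cite{Mei-Wei-Yong-2025}. Your additional sketches (finite-horizon approximation and monotone passage to the limit for the ARE \rf{BSAREPPP}, the dissipativity certificate via Proposition \ref{lemmaequista}, completion of squares via It\^o's formula with the decay condition killing the boundary term, and asymptotic optimality from the preceding proposition) are exactly the steps that the cited parallel proof carries out.
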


Without any essential difficulties, we can derive a similar result using {\bf (A2)}  instead of  {\bf (A2)$'$}.

\begin{proposition}\label{proA2}  \sl Suppose  {\bf (A2)} hold and Problem (MF-LQ)$^{\infty}$ is finite. The following is true.\ss
	
{\rm (1).} The following algebraic equation
\begin{align*}\begin{cases}
&\!\!\!\!\!\!0=\L[P^*_{i,\d}]+P^*_{i,\d}A_i+A_i^\top P^*_{i,\d}+C_i^\top P^*_{1,\d}C_i +Q_i\\
&\!\!\!\!\!\!\qq-(B_i^\top P^*_{i,\d}+D_i^\top P^*_{1,\d}C_i+ S_i)^\top( R_i+D_i^\top P^*_{1,\d}D_i+\d I)^{-1}(B_i^\top P^*_{i,\d}+D_i^\top P^*_{1,\d}C_i+ S_i)\\
&\!\!\!\!\!\!\qq+\d\widehat\Th_i^\top(R_i+D_i^\top P^*_{1,\d}D_i+\d I)^{-1}(B_i^\top P^*_{i,\d}+D_i^\top P^*_{1,\d}C_i+ S_i)\\
&\!\!\!\!\!\!\qq+\d(B_i^\top P^*_{i,\d}+D_i^\top P^*_{1,\d} C_i+S_i)^\top(R_i+D_i^\top P^*_{1,\d}D_i+\d I)^{-1}\widehat\Th_i\\
&\!\!\!\!\!\qq+\d\Th_i^\top(R_i+D_i^\top P^*_{1,\d}D_i)(R_i+D_i^\top P^*_{1,\d}D_i+\d I)^{-1}\widehat\Th_i\\
&\!\!\!\!\!\! R_i+D_i^\top P^*_{1,\d}D_i\ges 0,
\end{cases}
\end{align*}
admits a unique solution  $(P^*_{1,\d},P^*_{2,\d}):\cM\mapsto\dbS^n\times \dbS^n$ such that $(\widehat\Th_1+\Th^*_{1,\d},\widehat\Th_2+\Th^*_{2,\d})\in\BS[A_1,A_2, C_1,C_2; B_1,B_2,D_1,D_2]$ with
\begin{align*}
&\Th^*_{i,\d}:=-( R_i+D_i^\top P^*_{1,\d}D_i+\d I)^{-1}(B_i^\top P^*_{i,\d}+ D_i^\top P^*_{1,\d} C_i+ S_i)\\
&\qq-( R_i+D_i^\top P^*_{1,\d}D_i+\d I)^{-1}( R_i+D_i^\top P^*_{1,\d}D_i)\widehat\Th_i,\q i=1,2,
\end{align*}
where $(\widehat\Th_1,\widehat\Th_2)$  is the couple in  {\bf (A2)}.
	
{\rm(2).} There exist  $(\pi^*_{1,\d},\eta^*_{\d}, \nu^*_{1,\d})\in L^2_{\dbF^\a}(s,\infty;\dbR^n)^\perp\times  L_{\dbF}^2(s,\infty;\dbR^n)\times M_{\dbF^\a_-}^2(s,\infty;\dbR^n)^\perp$ and  $(\pi^*_{2,\d},\nu^*_{2,\d})\in L^2_{\dbF^\a}( s,\infty;\dbR^n) \times M_{\dbF^\a_-}^2(s,\infty;\dbR^n) $ satisfying the following infinite horizon BSDE on $[s,\i)$
\begin{align*}\begin{cases}&\!\!\!\!\!\! d\pi^*_{1,\d}(t)-\eta^*_{\d}(t)dW(t)-\nu^*_{1,\d}dM(t) \\
&\!\!\!\!\!\!\qq + \[(A_1^{\widehat\Th_1+\Th^*_{1,\d}})^\top\pi^*_{1,\d}+(C_1^{\widehat\Th_1+\Th^*_{1,\d}})^\top (P^*_{1,\d}\sigma_1+\Pi[\eta^*_{\d}])\!+\!P^*_{1,\d}b_1+q_1+\!(\widehat\Th_1+\Th^*_{1,\d})^\top r_1\]dt=0,\\	
&\!\!\!\!\!\! d\pi^*_{2,\d}(t)\!-\!\nu^*_{2,\d}dM(t)\\
&\!\!\!\!\!\!\qq+\[ (A_2^{\widehat\Th_2+\Th^*_{2,\d}})^\top\pi^*_{1,\d}+(C_2^{\widehat\Th_2+\Th^*_{2,\d}})^\top (P^*_{1,\d}\sigma_2+\Pi^\perp[\eta^*_{\d}])+P^*_{2,\d} b_2+ q_2+(\widehat\Th_2+\Th^*_{2,\d})^\top r_2\] dt=0,\\
&\ds\!\!\!\!\!\!\lim_{t\rightarrow\infty}\dbE|\pi^*_{i,\d}(t)|^2=0.\end{cases}\end{align*}

{\rm (3)} The  optimal strategy  for  Problem (MF-LQ)$_{\d}^{\infty}$ is%
\begin{align*}
u^*_{i,\d}(\cd) =(\widehat\Th_i+\Th^*_{i,\d})X_i-( R_i+D_i^\top P^*_{1,\d}D_i+\d I)^{-1} \[B_i^\top\pi^*_{i,\d}\!+\!D_i^\top\Pi_i[\eta^*_{\d}]\!+\!D_i^\top P^*_{1,\d}\sigma_i\!+\!r_i\],\q i=1,2,
\end{align*}
which is asymptotic optimal for  Problem (MF-LQ)$^{\infty}$.
\end{proposition}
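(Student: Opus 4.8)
The plan is to deduce everything from Theorem \ref{P-i-d-opti} by means of the closed-loop reduction already recorded in \eqref{costTh}--\eqref{SDE2}. Fixing the stabilizer $(\widehat\Th_1,\widehat\Th_2)$ supplied by {\bf(A2)} and writing $u_k=\widehat\Th_k(\a(\cd))X_k+v_k$, Problem (MF-LQ)$^\infty$ turns into the minimization of $\widehat J^\infty$ from \eqref{costTh} subject to \eqref{SDE2}, whose data are the decorated coefficients $A_i^{\widehat\Th_i},B_i^{\widehat\Th_i}=B_i,C_i^{\widehat\Th_i},D_i^{\widehat\Th_i}=D_i$ together with $Q_i^{\widehat\Th_i},S_i^{\widehat\Th_i},R_i,q_i^{\widehat\Th_i}=q_i+\widehat\Th_i^\top r_i$ and $r_i$. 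Since $(0,0)\in\BS[A_1^{\widehat\Th_1},A_2^{\widehat\Th_2},C_1^{\widehat\Th_1},C_2^{\widehat\Th_2};B_1^{\widehat\Th_1},B_2^{\widehat\Th_2},D_1^{\widehat\Th_1},D_2^{\widehat\Th_2}]$, the reduced problem satisfies {\bf(A2)$'$}, and since $\widehat J^\infty(s,\iota,x;v)=J^\infty(s,\iota,x;u)$ its finiteness is preserved. I would therefore apply Theorem \ref{P-i-d-opti} to the decorated data, obtaining an ARE of the shape \eqref{BSAREPPP}, an optimal gain $\widetilde\Th^*_{i,\d}$ as in \eqref{optimalThhomo}, a BSDE of the shape \eqref{eqpiooo0}, and an optimal $v^*_{i,\d}$ as in \eqref{v-id}, each expressed in the decorated coefficients.

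The second step is to translate these objects back to the original data. Abbreviate $\Phi_i:=B_i^\top P^*_{i,\d}+D_i^\top P^*_{1,\d}C_i+S_i$ and $\Psi_i:=R_i+D_i^\top P^*_{1,\d}D_i$. Inserting $A_i^{\widehat\Th_i}=A_i+B_i\widehat\Th_i$, $C_i^{\widehat\Th_i}=C_i+D_i\widehat\Th_i$ and $Q_i^{\widehat\Th_i}=Q_i+\widehat\Th_i^\top S_i+S_i^\top\widehat\Th_i+\widehat\Th_i^\top R_i\widehat\Th_i$ into \eqref{BSAREPPP}, the feedback argument becomes $\Phi_i+\Psi_i\widehat\Th_i$ while the inverted block remains $\Psi_i+\d I$. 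Expanding the completed square $(\Phi_i+\Psi_i\widehat\Th_i)^\top(\Psi_i+\d I)^{-1}(\Phi_i+\Psi_i\widehat\Th_i)$ and regrouping, the $\widehat\Th_i$-free part reproduces the first line of the stated ARE, whereas the linear- and quadratic-in-$\widehat\Th_i$ remainders collapse, through the single identity
\begin{equation*}
I-(\Psi_i+\d I)^{-1}\Psi_i=\d(\Psi_i+\d I)^{-1},\qquad \Psi_i-\Psi_i(\Psi_i+\d I)^{-1}\Psi_i=\d\,\Psi_i(\Psi_i+\d I)^{-1},
\end{equation*}
into precisely the three extra $\d$-weighted cross terms displayed in Proposition \ref{proA2}(1).

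The remaining items then follow by the same substitution with no new idea. A direct check shows $\widetilde\Th^*_{i,\d}$ equals the gain $\Th^*_{i,\d}$ stated in the proposition, so that $A_i^{\widehat\Th_i}+B_i\widetilde\Th^*_{i,\d}=A_i^{\widehat\Th_i+\Th^*_{i,\d}}$ and likewise for $C$; hence the decorated version of \eqref{eqpiooo0} becomes exactly the BSDE in Proposition \ref{proA2}(2) once $q_i^{\widehat\Th_i}=q_i+\widehat\Th_i^\top r_i$ is used to assemble the term $(\widehat\Th_i+\Th^*_{i,\d})^\top r_i$. Because a stabilizer of the decorated system corresponds to its $\widehat\Th$-shift for the original system, we obtain $(\widehat\Th_1+\Th^*_{1,\d},\widehat\Th_2+\Th^*_{2,\d})\in\BS[A_1,A_2,C_1,C_2;B_1,B_2,D_1,D_2]$. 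The optimal control is finally recovered from $u^*_{i,\d}=\widehat\Th_iX_i+v^*_{i,\d}$, which yields the displayed feedback, and its asymptotic optimality for Problem (MF-LQ)$^\infty$ follows exactly as in the finiteness argument carried out above.

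The part I expect to be genuinely laborious is the ARE bookkeeping in the second step: re-expanding the completed-square block around $\widehat\Th_i$ and matching every cross term requires care, even though each manipulation is elementary. By contrast the BSDE, the gain identity, the stabilizer shift and the asymptotic optimality are immediate once the reduction to {\bf(A2)$'$} and Theorem \ref{P-i-d-opti} are in place.
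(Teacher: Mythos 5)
Your proposal is correct and follows essentially the same route as the paper: reduce to the {\bf(A2)$'$} setting via the stabilizer $(\widehat\Th_1,\widehat\Th_2)$, apply Theorem \ref{P-i-d-opti} to the decorated coefficients $A_i^{\widehat\Th_i},C_i^{\widehat\Th_i},Q_i^{\widehat\Th_i},S_i^{\widehat\Th_i}$, and expand the resulting ARE and gain back in the original data, with the resolvent identity $I-(\Psi_i+\d I)^{-1}\Psi_i=\d(\Psi_i+\d I)^{-1}$ producing exactly the three $\d$-weighted cross terms. The paper's own proof performs the same substitution and expansion (treating (2) and (3) as straightforward), so your write-up, which additionally isolates the algebraic identities driving the cancellation, is a faithful and if anything slightly more explicit version of it.
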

\begin{proof} Now suppose $(\Th_1,\Th_2)\in \BS[A_1,A_2, C_1,C_2; B_1,B_2,D_1,D_2].$ We only need to prove (1) and the proof for (2) and (3) are straightforward. Replacing $A_i,C_i,Q_i,S_i$ by $A^{\widehat\Th_i}_i,C^{\widehat\Th_i}_i,Q^{\widehat\Th_i}_i,S^{\widehat\Th_i}_i$ respectively in \eqref{BSAREPPP}, we have
\begin{align*}
&0=\L[P^*_{i,\d}]+P^*_{i,\d}A^{\widehat\Th_i}_i+(A^{\widehat\Th_i}_i)^\top P^*_{i,\d}+(C_i^{\widehat\Th_i})^\top P^*_{1,\d}C^{\widehat\Th_i}_i +Q^{\widehat\Th_i}_i
\\
&\qq-(B_i^\top P^*_{i,\d}+ D_i^\top P^*_{1,\d} C^{\widehat\Th_i}_i+ S^{\widehat\Th_i}_i) ^\top( R_i+D_i^\top P^*_{1,\d}D_i+\d I)^{-1}(B_i^\top P^*_{i,\d}+ D_i^\top P^*_{1,\d} C^{\widehat\Th_i}_i+ S^{\widehat\Th_i}_i)
\\
&\q=\L[P^*_{i,\d}]+P^*_{i,\d}A_i+A_i^\top P^*_{i,\d}+C_i^\top P^*_{1,\d}C_i +Q_i
\\
&\qq-(B_i^\top P^*_{i,\d}+D_i^\top P^*_{1,\d}C_i+ S_i)^\top( R_i+D_i^\top P^*_{1,\d}D_i+\d I)^{-1}(B_i^\top P^*_{i,\d}+D_i^\top P^*_{1,\d}C_i+ S_i)
\\
&\qq+\d\widehat\Th_i^\top(R_i+D_i^\top P^*_{1,\d}D_i+\d I)^{-1}(B_i^\top P^*_{i,\d}+D_i^\top P^*_{1,\d}C_i+ S_i)
\\
&\qq+\d(B_i^\top P^*_{i,\d}+D_i^\top P^*_{1,\d} C_i+S_i)^\top(R_i+D_i^\top P^*_{1,\d}D_i+\d I)^{-1}\widehat\Th_i
\\
&\qq+\d\widehat\Th_i^\top(R_i+D_i^\top P^*_{1,\d}D_i)(R_i+D_i^\top P^*_{1,\d}D_i+\d I)^{-1}\widehat\Th_i.
\end{align*}
The optimal $\Th^*_{i,\d}$ satisfies
\begin{align*}
&\Th^*_{i,\d}=-( R_i+D_i^\top P^*_{1,\d}D_i+\d I)^{-1}(B_i^\top P^*_{i,\d}+ D_i^\top P^*_{1,\d} C^{\widehat\Th_i}_i+ S^{\widehat\Th_i}_i)
\\
&\q=-( R_i+D_i^\top P^*_{1,\d}D_i+\d I)^{-1}(B_i^\top P^*_{i,\d}+ D_i^\top P^*_{1,\d} C_i+ S_i)
\\
&\qq-( R_i+D_i^\top P^*_{1,\d}D_i+\d I)^{-1}( R_i+D_i^\top P^*_{1,\d}D_i)\widehat\Th_i,
\end{align*}
for $i=1,2$. The proof is complete.
\end{proof}

Before we finish this section, we emphasize that the assumption that    Problem (MF-LQ)$^{\infty}$ being finite is also necessary for the  existence of a sequence asymptotic optimal strategies. Now, a natural question to ask here is whether we can take the limit for $P^*_{i,\d}$ and $\Th^*_{i,\d}$ in \eqref{BSAREPPP} and \eqref{optimalThhomo} to find an optimal control. We see that $P^*_{i,\d}$ is decreasing and bounded from below, and thus admits a limit when $\d\rightarrow0^+.$ While the situation for $\Th^*_{i,\d}$ is totally different. Let us see the following example.
\begin{example} Consider the following one-dimensional SDE
	$$dX(t)=(-X(t)+\dbE u(t))dt+ dW(t).$$
	Then by orthogonal decomposition, we have
\begin{align*}dX_1(t)=-X_1(t)dt+dW(t),\qq d X_2(t)=(-X_2(t)+u_2(t))dt
\end{align*}
 with a cost functional
\begin{align*}J^\infty(s,x;u_1(\cd),u_2(\cd))=\dbE\int_s^\infty |X_2(t)|^2dt.
\end{align*}
We see that $(0,0)$ is a stabilizer. It is obvious that $J^\infty$ is bounded from below and the optimization problem is finite. All the assumptions are fulfilled. While the  detailed calculation yields that
\begin{align*}P^*_{2,\d}=\sqrt{\d^2+\d}-\d\text{ and }\Th^*_{2,\d}=-\d^{-1}(\sqrt{\d^2+\d}-\d).
\end{align*}
Obviously we have $\Th^*_{2,\d}\rightarrow-\infty,$ as $\delta\rightarrow 0^+$. The key reason here is that the optimal control  does not exist and
\begin{align*}\dbE\int_s^\infty|u^*_{2,\d}(t)|^2dt\rightarrow\infty\text{ as }\d\rightarrow 0^+.
\end{align*}
\end{example}

\section{Solvability}\label{sec:sol}

In this section, we will consider the solvability of Problem (MF-LQ)$^{\infty}$. Our main result is to show an equivalent characterization for  the open-loop and closed-loop solvability. The following is our main result.

\begin{theorem} \label{mainthsolv} \sl Under  {\bf (A2)}, the followings are equivalent.

{\rm (i)}. Problem (MF-LQ)$^{\infty}$ is open-loop solvable.

{\rm (ii)}.  Problem (MF-LQ)$^{\infty}$ is closed-loop solvable.

{\rm (iii)}.
The following algebraic equation
\begin{align}\label{BSAREPPP222}\begin{cases}& \!\!\!\!\!\!\L[P^*_{i}]+P^*_{i}A_i+A_i^\top P^*_{i}+C_i^\top P^*_{1}C_i +Q_i
\\
& \!\!\!\!\!\!\qq-(B_i^\top P^*_{i}+ D_i^\top P^*_{1} C_i+ S_i) ^\top( R_i+D_i^\top P^*_{1}D_i)^{\dagger}(B_i^\top P^*_{i}+ D_i^\top P^*_{1} C_i+ S_i)=0,
\\
& \!\!\!\!\!\!R_i+D_i^\top P^*_{1}D_i\ges 0,\qq \sR(B_i^\top P^*_{i}+ D_i^\top P^*_{1} C_i+ S_i)\subset\sR(R_i+D_i^\top P^*_{1}D_i),
\end{cases}\end{align}
admits a solution $(P^*_1,P^*_2):\cM\mapsto\dbS^n\times \dbS^n$  such that $(\Th^*_{1},\Th^*_2)\in\BS[A_1,A_2, C_1,C_2; B_1,B_2,D_1,D_2]$,
where
\begin{align}\label{barThlimit}&\Th^*_{i}=-( R_i+D_i^\top P^*_{1}D_i)^{\dagger}(B_i^\top P^*_{i}+ D_i^\top P^*_{1} C_i+ S_i)+[I-( R_i+D_i^\top P^*_{1}D_i)^\dagger( R_i+D_i^\top P^*_{1}D_i)] \Xi _i
\end{align}
for some $\Xi _i(\cdot):\cM\mapsto \dbR^{m\times n}$.
	
Let   $(\pi^*_{1},\eta^*, \nu^*_{1})\in L^2_{\dbF^\a}(s,\infty;\dbR^n)^\perp\times  L_{\dbF}^2(s,\infty;\dbR^n)\times M_{\dbF^\a_-}^2(s,\infty;\dbR^n)^\perp$ and  $(\pi^*_{2},\nu^*_{2})\in L^2_{\dbF^\a}( s,\infty;\dbR^n) \times M_{\dbF^\a_-}^2(s,\infty;\dbR^n) $ be the solution to the following BSDE on $[s,\i)$
\bel{eqpiooo02222}\left\{\ba{ll}
\ns\ds\!\!\! d\pi^*_{1}(t)-\eta^*(t)dW(t)-\nu^*_{1}\circ dM(t) \\
\ns\ds\q + \[(A_1^{\Th^*_{1}})^\top\pi^*_{1}\!+\!(C_1^{\Th^*_{1}})^\top (P^*_{1}\sigma_1\!+\!\Pi_1[\eta^*])\!+\!P^*_{1}b_1\!+\!q_1\!+\!{\Th^*_{1}}^\top r_1\]dt=0,\\
\ns\ds\!\!\!\! d\pi^*_{2}(t)\!-\!\nu^*_{2}\circ dM(t) \!+\! \[ (A_2^{\Th^*_{2}})^\top\pi^*_{1}\!+\!(C_2^{\Th^*_{2}})^\top (P^*_{1}\sigma_2\!+\!\Pi_2[\eta^*])\!+\!P^*_{2} b_2\!+\!q_2\!+\!{\Th^*_{2}}^\top r_2\] dt=0,\\
\ns\ds\!\!\!\! \lim_{t\rightarrow\infty}\dbE|\pi^*_{i}(t)|^2=0.\ea\right.\eel
It follows that
\begin{align}\label{linearcondition}B_i^\top\pi^*_{i}\!+\!D_i^\top\Pi_i[\eta^*]\!+\!D_i^\top P^*_{1}\sigma_i\!+\!r_i\in \sR( R_i+D_i^\top P^*_{1}D_i).
\end{align}
\end{theorem}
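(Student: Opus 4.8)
The plan is to establish the cycle $(iii)\Rightarrow(ii)\Rightarrow(i)\Rightarrow(iii)$. Throughout I would argue first under {\bf (A2)$'$}, so that $\sU^{s,\iota,x}_{ad}[s,\i)=\sU[s,\i)$ and the clean forms \eqref{BSAREPPP222}--\eqref{eqpiooo02222} apply verbatim, and then transfer the conclusion to {\bf (A2)} by the $(\widehat\Th_1,\widehat\Th_2)$-shift used in Proposition \ref{proA2}. The implication $(ii)\Rightarrow(i)$ is immediate: if the strategy $(\Th^*_1,\Th^*_2,v^*_1,v^*_2)$ is closed-loop optimal with outcome $u^*$, then Proposition \ref{equivalence} lets me represent any admissible $u$ as the outcome $u^{\Th^*,v}$ of the \emph{same} stabilizer $(\Th^*_1,\Th^*_2)$ for a suitable $v$, so specializing \eqref{J<J} to $(\Th_1,\Th_2)=(\Th^*_1,\Th^*_2)$ gives $J^\i(s,\iota,x;u^*)\le J^\i(s,\iota,x;u)$, i.e. $u^*$ is open-loop optimal.

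For $(iii)\Rightarrow(ii)$ I would complete the square. Abbreviating $\Si_i:=R_i+D_i^\top P^*_1D_i$ and $L_i:=B_i^\top P^*_i+D_i^\top P^*_1C_i+S_i$, the range condition \eqref{linearcondition} makes $v^*_i:=-\Si_i^\dagger\big[B_i^\top\pi^*_i+D_i^\top\Pi_i[\eta^*]+D_i^\top P^*_1\sigma_i+r_i\big]$ well defined, and I set $u^*_i=\Th^*_iX_i+v^*_i$. Applying It\^o's formula to $t\mapsto\sum_{i=1}^2\big(\lan P^*_i(\a(t))X_i(t),X_i(t)\ran+2\lan\pi^*_i(t),X_i(t)\ran\big)$ along the coupled dynamics \eqref{SDE1}, and using the Riccati system \eqref{BSAREPPP222} to cancel the terms quadratic in $X_i$, the BSDE \eqref{eqpiooo02222} to cancel the terms linear in $X_i$, and the $dM$-integrands to cancel the jump contributions, the cost reduces to a control-independent constant plus $\sum_i\dbE\int_s^\i\lan\Si_i(u_i-u^*_i),u_i-u^*_i\ran dt$. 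Because $(\Th^*_1,\Th^*_2)\in\BS[A_1,A_2,C_1,C_2;B_1,B_2,D_1,D_2]$ keeps $X_i\in L^2$ and $\lim_{t\to\i}\dbE|\pi^*_i(t)|^2=0$, the boundary terms at infinity vanish; since $\Si_i\ges0$, this exhibits $u^*_1\oplus u^*_2$ as optimal, and its feedback form makes it closed-loop optimal.

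The substantial direction is $(i)\Rightarrow(iii)$, where I would exploit the regularization of Section \ref{sec:fin}. Open-loop solvability forces finiteness, so by Theorem \ref{P-i-d-opti} and the remark after it the regularized solutions $P^*_{i,\d}$ are monotone, bounded below, and converge to some $P^*_i$ as $\d\to0^+$. The crux is a uniform $L^2$-bound on the regularized optimal controls coming from Proposition \ref{solvab}(ii): since open-loop solvability of Problem (MF-LQ)$^\i$ gives open-loop solvability of the homogeneous Problem (MF-LQ)$^0_\i$ with an optimal $u^*$, comparing $V^{0,\i}_\d(x)=J^{0,\i}_\d(x;u^*_\d)=J^{0,\i}(x;u^*_\d)+\d\|u^*_\d\|^2\ges V^{0,\i}(x)+\d\|u^*_\d\|^2$ with $V^{0,\i}_\d(x)\les J^{0,\i}_\d(x;u^*)=V^{0,\i}(x)+\d\|u^*\|^2$ yields $\|u^*_\d\|_{L^2}\les\|u^*\|_{L^2}$ uniformly in $\d$. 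This is precisely the estimate that breaks down under mere finiteness, as witnessed by the blow-up $\Th^*_{2,\d}\to-\i$ in the Example of Section \ref{sec:fin}, so it is exactly here that open-loop solvability (not just finiteness) is used.

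From this uniform bound I would pass to the limit. With $\Si_{i,\d}:=R_i+D_i^\top P^*_{1,\d}D_i+\d I\to\Si_i$ and $L_{i,\d}:=B_i^\top P^*_{i,\d}+D_i^\top P^*_{1,\d}C_i+S_i\to L_i$, boundedness of $u^*_{i,\d}=-\Si_{i,\d}^{-1}L_{i,\d}X_{i,\d}$ together with $P^*_{i,\d}\to P^*_i$ forces the inclusion $\sR(L_i)\subset\sR(\Si_i)$, whence $\Si_{i,\d}^{-1}L_{i,\d}\to\Si_i^\dagger L_i$ on $\sR(\Si_i)$ and $L_{i,\d}^\top\Si_{i,\d}^{-1}L_{i,\d}\to L_i^\top\Si_i^\dagger L_i$; letting $\d\to0^+$ in \eqref{BSAREPPP} then produces \eqref{BSAREPPP222}, with the kernel directions of $\Si_i$ absorbed into the free parameter $\Xi_i$ of \eqref{barThlimit}. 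That $(\Th^*_1,\Th^*_2)$ is a stabilizer follows by identifying the limit of the closed-loop states $X_{i,\d}$ with the state driven by $\Th^*_i$ and noting it lies in $L^2$, equivalently by checking via Proposition \ref{lemmaequista} that the dissipativity \eqref{dissiptcre} survives in the limit. Finally, the BSDE \eqref{eqpiooo02222} and the nonhomogeneous range condition \eqref{linearcondition} are obtained by the same limiting procedure applied to \eqref{eqpiooo0}, the analogous uniform bound on the feedforward parts $v^*_{i,\d}$ (from open-loop solvability of the full problem) delivering \eqref{linearcondition}. I expect the genuine obstacle to be this passage to the limit in the singular Riccati equation: extracting $\sR(L_i)\subset\sR(\Si_i)$ and proving that the degenerate feedback $\Th^*_i$ still stabilizes, since the inverses $\Si_{i,\d}^{-1}$ blow up and, as the Example shows, the whole step is false without the open-loop solvability input.
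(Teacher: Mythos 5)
Your overall route coincides with the paper's: the cycle (iii)$\Rightarrow$(ii)$\Rightarrow$(i)$\Rightarrow$(iii), working under {\bf (A2)$'$} and transferring to {\bf (A2)} via Proposition \ref{proA2}, completion of squares for (iii)$\Rightarrow$(ii), and for (i)$\Rightarrow$(iii) the regularization of Section \ref{sec:fin} together with the value-comparison estimate \eqref{bondeduid}. The first two implications and the uniform bound $\dbE\int_s^\i\sum_i|u^*_{i,\d}|^2dt\les\dbE\int_s^\i\sum_i|u^*_i|^2dt$ are fine. The genuine gap is in the next step of (i)$\Rightarrow$(iii). You assert that $L^2$-boundedness of the controls $u^*_{i,\d}$ together with $P^*_{i,\d}\to P^*_i$ ``forces'' $\sR(L_i)\subset\sR(\Si_i)$, \emph{whence} $\Si_{i,\d}^{-1}L_{i,\d}\to\Si_i^\dagger L_i$. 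That ``whence'' is a non sequitur, and the paper's own Example in Section \ref{sec:fin} refutes it: there $R_2=0$, $D_2=0$, so $\Si_{2,\d}=\d$, $L_{2,\d}=P^*_{2,\d}=\sqrt{\d^2+\d}-\d\to0$, the limiting inclusion $\sR(L_2)\subset\sR(\Si_2)$ holds trivially ($L_2=0$), and yet $\Th^*_{2,\d}=-\Si_{2,\d}^{-1}L_{2,\d}\to-\i$. So range inclusion in the limit does not prevent blow-up of the gains; what must be proved first is a \emph{matrix-level} uniform bound on $\Th^*_{i,\d}$, and this is exactly the step the paper supplies and you skip: writing $u^*_{2,\d}(t)=\Th^*_{2,\d}(\a(t))\bar\Psi^{s,\iota}_{2,\d}(t)x_2$, noting $\lan x_2,\bar\Phi_{2,\d}(\iota)x_2\ran=\dbE\int_s^\i|u^*_{2,\d}|^2dt$ is bounded uniformly in $\d$ for every $x_2$ (using that the open-loop optimal control is given by a linear operator $U^*(s,\iota)$, Proposition \ref{solvab}(iii)), hence $\bar\Phi_{2,\d}$ is bounded above, and then using the Lyapunov equation $\L[\bar\Phi_{2,\d}]+\bar\Phi_{2,\d}(A_2+B_2\Th^*_{2,\d})+(A_2+B_2\Th^*_{2,\d})^\top\bar\Phi_{2,\d}+{\Th^*_{2,\d}}^\top\Th^*_{2,\d}=0$ to bound ${\Th^*_{2,\d}}^\top\Th^*_{2,\d}$. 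Only after this does one extract a convergent subsequence, and then the identity $\Si_i\Th^*_i+L_i=0$ obtained in the limit \emph{delivers} the range inclusion and \eqref{barThlimit} — the logical order is the reverse of yours.

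Two secondary flaws in the same direction. Your fallback for stabilizability, ``the dissipativity \eqref{dissiptcre} survives in the limit,'' does not work: \eqref{dissiptcre} is a strict inequality and strictness is not preserved under limits; the paper instead applies Fatou's lemma to $\dbE\int_s^\i|\Th^*_i(\a(t))\bar\Psi^{s,\iota}_i(t)x_i|^2dt$ and invokes the characterization of stabilizers (Proposition 3.3 of \cite{Mei-Wei-Yong-2025}). Finally, \eqref{linearcondition} is not obtained by passing to the limit in the BSDEs \eqref{eqpiooo0} — that would require uniform estimates on $(\pi^*_{i,\d},\eta^*_\d,\nu^*_{i,\d})$ which you never establish; instead, once $(\Th^*_1,\Th^*_2)$ is known to be a stabilizer, \eqref{eqpiooo02222} is solved directly for the limiting coefficients, and \eqref{linearcondition} follows as a \emph{necessary} condition from the completion-of-squares identity \eqref{cJu1u2}: if the linear term were not in $\sR(\Si_i)$, the cost could be driven to $-\i$, contradicting solvability.
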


\begin{proof}  In the proof, we will assume {\bf (A2)'} instead of {\bf (A2)} for the sake of convenience.  The proof below can be easily extended to the case when {\bf (A2)} is assumed provided Proposition \ref{proA2}.

(ii)$\Rightarrow$(i): Trivial.

(iii)$\Rightarrow$(ii): The existence of the solution to \eqref{eqpiooo02222} follows from Thereom 5.3 in \cite{Mei-Wei-Yong-2025}. Therefore it suffices to prove the feedback strategy
\begin{align}\label{opt u} u^*_i(t)=\Th^*_i(\a(t))X_i(t)-(R_i+D_i^\top P^*_{1}D_i)^\dagger (B_i^\top\pi^*_{i}\!+\!D_i^\top\Pi_i[\eta^*]\!+\!D_i^\top P^*_{1}\sigma_i\!+\!r_i)
\end{align}
gives an optimal control for  Problem (MF-LQ)$^{\infty}$. Applying It\^o's formula  to $\sum_{i=1}^2\lan  P^*_{i}(\a(t))X_i(t),X_i(t)\ran+2\lan \pi_i(t),X_i(t)\ran $ and completing the square,  one has
\begin{align*}
&J^\infty(s,\iota,x_1\oplus x_2;u_1(\cd)\oplus u_2(\cd))
\\
&\q=  \sum_{i=1}^2\dbE\[\lan P^*_{i}(\iota)x_i,x_i \ran+2\lan  \pi^*_{i}(s)x_i,x_i\ran+\int_s^\i(\lan P^*_{i}\si_i,\si_i\ran+2\lan \pi^*_{i} ,  b_i\ran+2\lan \Pi_i[\eta^*], \si_i \ran )dt
\\
&\qq+  \int_s^\i \( (\lan R_i+D_i^\top P^*_{1}D_i)(u_i-\Th^*_{i} X_i),u_i-\Th^*_{i}X_i  \ran-2\lan B_i^\top \pi^*_{i}+D_i^\top \Pi_i[\eta^*]+D_i^\top P^*_{1}\si_i+r_i, u_i-\Th^*_{i}  X_i\ran\)dt\Big]
\\
&\q \geq  \sum_{i=1}^2\dbE\[\lan P^*_{i}(\iota)x_i,x_i \ran+2\lan \pi^*_{i}(s)x_i,x_i\ran+  \int_s^\i(\lan P^*_{i}\si_i,\si_i\ran  +2\lan \pi^*_{i} ,  b_i\ran+2\lan \eta^*, \si \ran) dt
\\
&\qq- \dbE\int_s^\i \lan(B_i^\top \pi^*_{i}+D_i ^\top P^*_{1}\si_i+D_i^\top\Pi_i[\eta^*]+r_i), (R_i+D_i^\top P^*_{1}D_i )^\dagger(B_i^\top \pi^*_{i}+D_i ^\top P^*_{1}\si_i+D_i^\top\Pi_i[\eta^*]+r_i)\ran dt\Big].
\end{align*}
For any closed-loop strategy $u_i(\cd)=\Th_i(\a(\cd)) X_i(\cd)+v_i(\cd)$, 	the equality holds if and only if $\Th_i=\Th^*_i$ and $v_i=-(R_i+D_i^\top P^*_{1}D_i)^\dagger (B_i^\top\pi^*_{i}\!+\!D_i^\top\Pi_i[\eta^*]\!+\!D_i^\top P^*_{1}\sigma_i\!+\!r_i)$ hold. This says $u^*_i$ defined in \eqref{opt u} is the optimal closed-loop strategy.\ss
	
(i)$\Rightarrow$(iii):
Because  Problem (MF-LQ)$^{\infty}$ is solvable,  Problem (MF-LQ)$_0^{\infty}$ is finite. Theorem \ref{P-i-d-opti} yields that the optimal strategy for  Problem (MF-LQ)$_{0,\d}^{\infty}$ can be represented as
$u^*_{i,\d}(t)=\Th^*_{i,\d}(\a(t))X_i(t).$
Now we aim to show that $\Th^*_{i,\d}(\cd)$ is bounded to take the limit as $\d\rightarrow 0^+$.

Because  Problem (MF-LQ)$_0^{\infty}$ is open-loop solvable, by (iii) in Proposition \ref{solvab}, let $ u^*(\cd)=u_1^*(\cd)\oplus u_2^*(\cd)\in\sU[s,\infty)$ be the optimal control with $u^*(\cd)=[U^*(\iota,\cd) x](\cd)$. Then we have
\begin{align*}&V^{0,\infty}(s,\iota,x_1\oplus x_2)+\d\dbE\int_s^\infty\sum_{i=1}^2| u^*_{i}(t)|^2dt= J_\d^{0,\infty}(s,\iota,x_1\oplus x_2;u^*_{1}(\cd)\oplus u^*_{2}(\cd))
\\
&\q\ges V_\d^{0,\infty}(s,\iota,x_1\oplus x_2)=J^{0,\infty}(s,\iota,x_1\oplus x_2;u^*_{1,\d}(\cd)\oplus u^*_{2,\d}(\cd))+\d\dbE\int_s^\infty\sum_{i=1}^2|u^*_{i,\d}(t)|^2dt
\\
&\q\ges V^{0,\infty}(s,\iota,x_1\oplus x_2)+\d\dbE\int_s^\infty\sum_{i=1}^2|u^*_{i,\d}(t)|^2dt.
\end{align*}
This says that
\begin{align}\label{bondeduid}
\dbE\int_s^\infty\sum_{i=1}^2|u^*_{i,\d}(t)|^2dt\les \dbE\int_s^\infty\sum_{i=1}^2| u^*_{i}(t)|^2dt.
\end{align}
Note that $u^*_{i,\d}(t)=\Th^*_{i,\d}(\a(t))X_i(t)$,  we have
$$u^*_{2,\d}(t)=\bar \Psi^{s,\iota}_{2,\d}(t)x_2,$$ where $\bar \Psi^{s,\iota}_{2,\d}$ solves
$$d\bar\Psi^{s,\iota}_{2,\d}(t)=(A_2+B_2\Th^*_{2,\d})(\a(t))\bar\Psi^{s,\iota}_{2,\d}(t)dt,\q t\in[s,\i).$$
Write $$\bar\Phi_{2,\d}(\iota):=\dbE\[\int_s^\infty (\bar \Psi_{2,\d}^{s,\iota})^\top(\a(t)){\Th^*_{2,\d}}^\top(\a(t))\Th^*_{2,\d}(\a(t))\bar  \Psi_{2,\d}^{s,\iota}(\a(t))dt\Big|\a(s)=\iota\].$$
Then it follows that
$$\lan x_2,  \bar\Phi_{2,\d}(\iota) x_2\ran=\int_s^\infty|u^*_{2,\d}(t)|^2dt\leq \dbE\int_s^\infty\sum_{i=1}^2| u^*_{i}(t)|^2dt\leq \sum_{i=1}^2\int_s^\infty\lan \dbE |[U^*(s,\iota) x](t)|^2\ran dt.$$
Note that $U^*(s,\iota)$ is a linear operator over $x$,  $\bar \Phi_{2,\d}(\iota)$ is positive-definite and bounded  from above.
Similar to the proof of Proposition 3.3. in \cite{Mei-Wei-Yong-2025}, by taking $L_2={\Th^*_{2,\d}}^\top\Th^*_{2,\d}$,
 one has
$$\L[\bar\Phi_{2,\d}] +\bar\Phi_{2,\d}(A_2+B_2\Th^*_{2,\d})+(A_2+B_2\Th^*_{2,\d})^\top \bar\Phi_{2,\d}+{\Th^*_{2,\d}}^\top\Th^*_{2,\d}=0.$$
This concludes that $\Th^*_{2,\d}$ is  bounded necessarily. Let $x_2=0$ and repeat the above proof for $u^*_{1,\d}(\cd)$. We can prove  $\Th^*_{2,\d}$ is uniformly bounded as well.

Due to the boundedness of $(\Th^*_{1,\d},\Th^*_{2,\d})$, taking $\d\rightarrow 0^+$,  there exists a convergent subsequence $\d_k$ such that $(\Th^*_{1,\d_k},\Th^*_{2,\d_k})$ has a limit $(\Th^*_{1},\Th^*_2)$.   At the same time, $P^*_{i,\d}$ converges to $P^*_{i}$ because it is decreasing in $\d$. Consequently, we have
\begin{align*}
&0=\lim_{\d_k\rightarrow 0}\[(R_i+D_i^\top P^*_{1,\d_k} D_i+\d_k I)\Th^*_{i,\d_k}+(B_i^\top P^*_{i,\d_k}+ D_i^\top P^*_{1,\d_k} C_i+ S_i)\]
\\
&=(R_i+D_i^\top P^*_{1} D_i)\Th^*_{i}+(B_i^\top P^*_{i}+ D_i^\top P^*_{1} C_i+ S_i).
\end{align*}
This implies \eqref{barThlimit}. Now let us verify that $(\Th^*_{1},\Th^*_2)\in \BS[A_1,A_2, C_1,C_2; B_1,B_2,D_1,D_2]$.

Define \begin{align*}\begin{cases}
&\!\!\!\!\!\!d\bar\Psi^{s,\iota}_1(t)=(A_1+B_1\Th^*_{1})(\a(t))\bar\Psi^{s,\iota}_1(t)dt+(C_1+D_1\Th^*_{1})(\a(t))\bar\Psi^{s,\iota}_1(t)dW(t),
\\
&\!\!\!\!\!\!d\bar\Psi^{s,\iota}_2(t)=(A_2+B_2\Th^*_{2})(\a(t))\bar\Psi^{s,\iota}_2(t)dt,\q t\in[s,\i),
\\
&\!\!\!\!\!\!\bar\Psi^{s,\iota}_1(s)=\bar\Psi^{s,\iota}_2(s)=I,\q\a(s)=\iota.
\end{cases}
\end{align*}
Fatou's lemma yields that
$$\dbE\int_s^\infty|\Th^*_2(\a(t))\bar\Psi^{s,\iota}_2(t)x_2|^2dt\les\liminf_{\d_k\rightarrow 0} \dbE\int_s^\infty|\Th^*_{2,\d}(\a(t))\bar  \Psi^{s,\iota}_{2,\d_k}(t)x_{2}|^2dt\les \dbE\int_s^\infty|[U^*(\iota,t)x](t)|^2dt.$$
Consider the special case when $x_2=0$. In this case $X^0_2(t)=0$ and   we have $X_1^0(t)=\bar\Psi^{s,\iota}_1(t)x_1$, we have
$$\dbE\int_s^\infty|\Th^*_1(\a(t))\bar\Psi^{s,\iota}_1(t)x_1|^2dt\les\liminf_{\d_k\rightarrow 0} \dbE\int_s^\infty|\Th^*_{1,\d}(\a(t))\bar  \Psi^{s,\iota}_{1,\d_k}(t)x_{1}|^2dt\les \dbE\int_s^\infty|[U^*(\iota,t)x](t)|^2dt.$$
By Proposition 3.3 in \cite{Mei-Wei-Yong-2025}, it follows that	 $(\Th^*_1,\Th^*_2)\in \BS[A_1,A_2, C_1,C_2; B_1,B_2,D_1,D_2]$.
	
Since $(\Th^*_{1},\Th^*_2)\in \BS[A_1,A_2, C_1,C_2; B_1,B_2,D_1,D_2]$, similar to \eqref{eqpiooo0}, \eqref{eqpiooo02222} admits a solution. Now let us verify \eqref{linearcondition}. Applying It\^o's formula on $\sum_{i=1}^2\lan  P^*_{i}(\a(t))X_i(t),X_i(t)\ran+2\lan \pi_i(t),X_i(t)\ran $ and completing the square,  one has
\begin{align}\label{cJu1u2}
&J^\infty(s,\iota,x_1\oplus x_2;u_1(\cd)\oplus u_2(\cd))
 \\
&\!\! =  \sum_{i=1}^2\dbE\[\lan P^*_{i}(\iota)x_i,x_i \ran+2\lan \pi^*_{i}(s)x_i,x_i\ran+\int_s^\i\lan P^*_{i}\si_i,\si_i\ran+2\lan \pi^*_{i} ,  b_i\ran+2\lan \Pi_i[\eta^*], \si_i \ran dt\]\nonumber
\\
& +  \int_s^\i \( (\lan R_i+D_i^\top P^*_{1}D_i)(u_i-\Th^*_{i} X_i),u_i-\Th^*_{i}X_i  \ran  \nonumber \\
&\qq \qq -2\lan B_i^\top \pi^*_{i}+D_i^\top \Pi_i[\eta^*]+D_i^\top P^*_{i}\si_i+r_i, u_i-\Th^*_{i}  X_i\ran\)dt.\!\! \nonumber
\end{align}
To guarantee that $J^\infty(s,\iota,x_1\oplus x_2;u_1(\cd)\oplus u_2(\cd))$ is bounded from below, it is necessary that \eqref{linearcondition} holds. The proof is complete.
\end{proof}

\begin{remark}\rm We remark that the optimal control obtained in Theorem \ref{mainthsolv} is independent of the choice of $(\widehat\Th_1,\widehat \Th_2)\in \BS[A_1,A_2, C_1,C_2; B_1,B_2,D_1,D_2]$, different from the asymptotic optimal sequence constructed in Proposition  \ref{proA2}. This is natural because the asymptotic optimal sequence of controls might not be unique.
\end{remark}

\section{Concluding Remarks}\label{sec:con}

In this paper, we have studied a stochastic LQ problem in an infinite horizon and in a switching environment. It is noted that the cost functional is not assumed to have positive-definite weights necessarily. When the problem is merely finite, a sequence of asymptotic optimal strategies is obtained in forms of the solutions to AREs and BSDEs. Moreover, when the problem is solvable, we derive an equivalent characterization for the open-loop and closed-loop solvablities. Further on this topic, the LQ control problem under an ergodic-type cost would be of interest for future study.

%
%

	\end{document}